\newtheorem{thm}{Theorem}[section]
\newtheorem{cor}{Corollary}[section]
\newtheorem{lemma}{Lemma}[section]
\newtheorem{sublemma}{Sublemma}[section]
\theoremstyle{definition}
\newtheorem{rem}{Remark}[section]
\newcommand{\SD}{\mathcal{S}}
\newcommand{\FF}{\mathcal{F}}
\newcommand{\ED}{\mathbb{E}^d}
\newcommand{\QD}{\mathbb{Q}^d}
\renewcommand{\S}{\mathbb{S}}
\renewcommand{\Re}{\mathbb{R}}
\newcommand{\Ze}{\mathbb{Z}}
\newcommand{\Qe}{\mathbb{Q}}
\newcommand{\B}{\mathbf{B}}
\newcommand{\spi}[2][]{[#2]^{#1}_{\mathrm{s}}}
\newcommand{\sconv}[2][]{\mathrm{conv^{#1}_{s}}\left(#2\right)}
\renewcommand{\star}[1]{\mathrm{st}\left(#1\right)}
\newcommand{\stsp}[1]{\mathrm{st_{s}}\left(#1\right)}
\newcommand{\kers}{\ker_{\mathrm{s}}}
\newcommand{\st}{\; : \; }
\newcommand{\remark}[1]{}
\DeclareMathOperator{\bd}{bd}
\DeclareMathOperator{\cl}{cl}
\DeclareMathOperator{\crad}{cr}
\DeclareMathOperator{\inter}{int}
\DeclareMathOperator{\conv}{conv}
\DeclareMathOperator{\card}{card}
\title{Spindle Starshaped Sets 
\footnote{Keywords: (spindle) starshaped set, (spindle) kernel, (spindle 
starshaped) flower-polyhedron, (spindle) convex (hull).   
2010 Mathematics Subject Classification: 52A30, 52A35, 52C99.}}
\author{K\'{a}roly Bezdek\thanks{Partially supported by a Natural Sciences and 
Engineering Research Council of Canada Discovery Grant.} and M\'arton Nasz\'odi 
\thanks{Partially supported by the Hung. Nat. Sci. Found. (OTKA) grants: K72537 
and PD104744 and by the J\'anos Bolyai Research Scholarship of the Hungarian 
Academy of Sciences.}}
\begin{document}

\maketitle

\begin{abstract}
In this paper, spindle starshaped sets are introduced and investigated, which 
apart from normalization form an everywhere dense subfamily within the family 
of starshaped sets. We focus on proving spindle starshaped analogues of recent  
theorems of Bobylev, Breen, Toranzos, and Zamfirescu on starshaped sets. 
Finally, we consider the problem of guarding treasures 
in an art gallery (in the traditional linear way as well as 
via spindles).
\end{abstract}

\section{Introduction}

We denote the origin of Euclidean $d$-space $\ED$ by $o$, a closed Euclidean
ball in $\ED$ centered at $z$ of radius $\lambda$ by $\B[z,\lambda]$, its
boundary, the sphere by $\S(z,\lambda)$. When $\lambda$ is omitted, it is 1.
For the \emph{circumradius} of a set $A\subseteq\ED$, we use $\crad
A=\inf\{r>0\st
A\subseteq\B[q,r] \mbox{ for some } q\in\ED\}$.

For any $\lambda>0$ we define the \emph{$\lambda-$spindle} of two points
$x,y\in\ED$ as 
\[
\spi[\lambda]{x,y}=\bigcap \{\B[u,\lambda]\st {u\in\ED};{x,y\in \B[u,\lambda]}
\} 
\]
if $|x-y|\leq 2\lambda$ (with $|\cdot|$ standing for the standard norm of 
$\ED$), and as $\spi[\lambda]{x,y}=\ED$ otherwise. Unless
otherwise noted, $\lambda=1$, and $\lambda$ is omitted. Clearly, for any
$\lambda <\nu$ we have $\spi[\nu]{x,y}\subset \spi[\lambda]{x,y}$.

We recall that for a set $A\subseteq\ED$ and a point $p\in A$, the
\emph{visibility
region} of $p$ in $A$ is 
\[\star{p, A}=\{q\in A\st [p,q]\subseteq A\},\]
where $[p,q]$ refers to the line segment joining $p$ and $q$. When obvious from 
the context, we may omit $A$. The \emph{kernel} of $A$ is
$\ker{A}=\{p\in A\st \star{p, A}=A\}$. We say that $A$ is a \emph{starshaped
set} if $\kers{A}\neq\emptyset$. In particular, a starshaped set is non-empty.

We define the spindle analogues of these notions. For a set $A\subseteq\ED$ and
a point $p\in A$, the \emph{spindle visibility
region} of $p$ in $A$ is 
\[\stsp{p, A}=\{q\in A\st \spi{p,q}\subseteq A\}.\]
When obvious from the context, we may omit $A$. The \emph{spindle kernel} of $A$
is $\kers{A}=\{p\in A\st \stsp{p, A}=A\}$. We say that $A$ is a \emph{spindle
starshaped set} if $\kers{A}\neq\emptyset$. In particular, a spindle starshaped
set is non-empty.

We recall from \cite{BLNP07} that $A$ is called \emph{spindle convex} if
$A=\kers{A}$. Note that if $\crad A>1$ (resp., $\crad A>2$), then $A$ is 
spindle convex (or spindle
starshaped) if, and only if, $A=\ED$. 
As we will see (Corollary~\ref{cor:kerintersection}),  if $S$ is a spindle 
starshaped set in $\ED$, then its spindle kernel $\kers(S)$ is spindle convex.

A motivation for the study of spindle starshaped sets is that a star
shaped set with $C^2$ boundary whose curvature is bounded away from zero
is necessarily spindle starshaped with respect to $\lambda$-spindles
for a sufficiently large $\lambda$.

Krasnosselsky's well-known theorem \cite{ Kr46} states the following: Let $S$ 
be a compact set in $\ED$. Assume that for any $d+1$ points
$x_1,\ldots,x_{d+1}$ of $S$, there is a point $y\in S$ such that
$\mathop{\cup}\limits_{i=1}^{d+1} [y,x_i]\subset S$. Then $S$ is a starshaped 
set. Bobylev \cite{Bo99}, \cite{Bo01} observed that the same proof gives the 
following 
somewhat stronger result: Let $S$ be a compact set in $\ED$. Then the kernel of 
$S$ can be obtained as
\begin{equation}\label{eq:krasnowell}
 \ker S = \bigcap_{x\in S} \conv \big(\mathrm{st}(x,S)\big),
\end{equation} 
where $\conv (\cdot )$ stands for the convex hull of the corresponding set.

We prove the following (somewhat stronger) spindle analogue of 
(\ref{eq:krasnowell}).

\begin{thm}\label{thm:krasnowellspindle}
 Let $S$ be a compact set in $\ED$. Then the spindle kernel of $S$ can be
obtained as
$$
 \kers S = \bigcap_{x\in \bd S} \sconv{\stsp{x,S}},
$$ 
where $\sconv \cdot $ denotes the spindle convex hull of the given set, i.e., 
the intersection of all spindle convex sets containing the given set and $\bd 
S$ 
stands for the boundary of $S$ in $\ED$.
\end{thm}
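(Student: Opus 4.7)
The forward inclusion $\kers S \subseteq \bigcap_{x \in \bd S}\sconv{\stsp{x,S}}$ is straightforward: for any $p \in \kers S$ and any $x \in \bd S \subseteq S$, symmetry of the spindle yields $\spi{x,p} = \spi{p,x} \subseteq S$, so $p \in \stsp{x,S} \subseteq \sconv{\stsp{x,S}}$.

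I would prove the reverse inclusion by contrapositive: given $p \notin \kers S$, exhibit an $x \in \bd S$ with $p \notin \sconv{\stsp{x,S}}$. The central tool is the separation property for spindle convex sets of circumradius at most one proved in \cite{BLNP07}: such a set equals the intersection of all closed unit balls containing it, so $p \notin \sconv{\stsp{x,S}}$ is equivalent to the existence of a closed unit ball $\B[u,1] \supseteq \stsp{x,S}$ with $p \notin \B[u,1]$. The task is therefore to construct such a ball for an appropriate boundary point $x$.

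There are two ways $p$ can fail to lie in $\kers S$. \emph{Case 1.} $p \notin S$. Compactness of $S$ gives a nearest point $x \in S$, which necessarily lies on $\bd S$, and the candidate separating ball is placed tangent to $\S(p, |p-x|)$ at $x$ on the far side of $x$ from $p$. \emph{Case 2.} $p \in S$ and $\spi{p,q} \not\subseteq S$ for some $q \in S$. Fix $z \in \spi{p,q} \setminus S$; using the representation $\spi{p,q} = \bigcap \{\B[u,1] \st p,q \in \B[u,1]\}$, pick a closed unit ball $\B[u_0,1]$ whose bounding sphere passes close to $z$, and walk from $p$ along the circular arc $\gamma = \S(u_0,1) \cap \spi{p,q}$ until first hitting $\bd S$ at a point $x$; take $\B[u_0,1]$ as the candidate separator.

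The principal obstacle in both cases is verifying that the candidate unit ball contains the entirety of $\stsp{x,S}$, not merely $x$ and its neighbours in $S$. The key ingredient is the monotonicity $\spi{x,w} \subseteq \spi{x,y}$ whenever $w \in \spi{x,y}$, an immediate consequence of writing each spindle as an intersection of unit balls; this also shows that $\stsp{x,S}$ is itself spindle starshaped with respect to $x$. For any $y \in \stsp{x, S}$ the spindle $\spi{x,y}$ lies in $S$ and therefore avoids the excluded region (the open ball around $p$ in Case 1, a neighbourhood of $z$ in Case 2). A sliding-sphere argument should then show that if $y$ were outside the candidate ball $\B[u,1]$, one could continuously deform $\B[u,1]$ about $x$ to a unit ball containing $x$ and $y$ whose boundary enters the excluded region, contradicting $\spi{x,y} \subseteq S$. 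The strengthening from the intersection over all of $S$ to the intersection over $\bd S$ comes for free, since the $x$ produced by the construction is always a boundary point.
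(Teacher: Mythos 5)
Your forward inclusion and your framing of the reverse inclusion by contrapositive via unit-ball separation (using the fact that a spindle convex set of circumradius at most one is the intersection of all unit balls containing it) is a legitimate reformulation, aligned in spirit with the paper's strategy. Case~1 ($p\notin S$) can be made to work: the tangent cone of $\spi{x,y}$ at $x$ has a direction pointing strictly into the open ball $\B(p,|p-x|)$ whenever $y\notin\B[u,1]$, so such a $y$ cannot be in $\stsp{x,S}$. However, Case~2 — which is where all the difficulty lies — has two genuine gaps.

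First, the candidate separator in Case~2 does not separate. You choose $\B[u_0,1]$ so that $\gamma=\S(u_0,1)\cap\spi{p,q}$ is an arc emanating from $p$; but $p\in\gamma\subseteq\S(u_0,1)\subset\B[u_0,1]$, so $p$ lies in the very ball that is supposed to exclude it. No matter how the arc-walking locates $x\in\bd S$, the conclusion $p\notin\sconv{\stsp{x,S}}$ cannot follow from $\stsp{x,S}\subseteq\B[u_0,1]$, because $p\in\B[u_0,1]$.

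Second, and more fundamentally, the step ``$\stsp{x,S}\subseteq\B[u_0,1]$'' is false for the $x$ you construct, and the ``sliding-sphere argument'' is too vague to repair this. The first exit point of an arc from $\bd S$ has no reason to be a point where a unit ball tangent to $\bd S$ in the arc's normal direction contains $\stsp{x,S}$: the boundary could turn sharply at $x$, and $\stsp{x,S}$ could protrude past that ball in some other direction. Identifying a boundary point $x$ together with a normal direction such that the unit ball supporting $\bd S$ at $x$ in that direction contains all of $\stsp{x,S}$ while excluding the candidate kernel point is exactly the crux, and the paper needs three separate steps for it: first, using arcs of radius greater than $\sqrt{2}$ and picking $x_3$ and $y_1$ as a closest pair between a sub-arc and $S$ (so that the normal $x_3-y_1$ is forced to be a supporting direction, and Lemma~\ref{lem:ballcircle} applies since the arc is shorter than a quarter circle); second, for arcs of radius in $(1,\sqrt2)$, invoking differentiability of the gauge function of $S$ at $x_1$ so that the gradient yields the needed normal; third, removing the differentiability assumption via Rademacher's theorem and the positive homogeneity of the gauge. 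None of this machinery appears in your proposal, and without it the separating ball cannot be produced.
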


This theorem combined with Helly's theorem \cite{He23,DGK63} yields

\begin{cor}\label{thm:krasno}
 Let $S$ be a compact set in $\ED$. Assume that for any $d+1$ points
$x_1,\ldots,x_{d+1}$ of $\bd S$, there is a point $y\in S$ such that
$\mathop{\cup}\limits_{i=1}^{d+1}\spi{y,x_i}\subset S$. Then $S$ is a spindle 
starshaped set. 
\end{cor}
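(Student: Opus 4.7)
The plan is to deduce the corollary from Theorem~\ref{thm:krasnowellspindle} via Helly's theorem applied to the family $\mathcal{F}=\{\sconv{\stsp{x,S}} \st x\in\bd S\}$; by that theorem, $S$ is spindle starshaped precisely when $\bigcap\mathcal{F}\neq\emptyset$.

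The preparatory observation is that spindle convexity implies ordinary convexity: for any points $x,y$, every unit ball containing $\{x,y\}$ also contains the segment $[x,y]$, so $[x,y]\subseteq\spi{x,y}$, and hence any spindle convex set is convex in the classical sense. In particular every member of $\mathcal{F}$ is a convex subset of $\ED$.

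Next I would translate the hypothesis of the corollary into the hypothesis of Helly's theorem. Given any $d+1$ boundary points $x_1,\ldots,x_{d+1}\in\bd S$, the prescribed point $y\in S$ satisfies $\spi{y,x_i}\subseteq S$ for every $i$, which (using the symmetry $\spi{y,x_i}=\spi{x_i,y}$) is exactly the statement that $y\in\stsp{x_i,S}\subseteq\sconv{\stsp{x_i,S}}$ for each $i$. Thus every $(d+1)$-tuple of members of $\mathcal{F}$ has a common point.

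Applying Helly's theorem to the family $\mathcal{F}$ then yields a point in $\bigcap\mathcal{F}=\kers S$, completing the proof. The main technical obstacle is the passage from finite to infinite subfamilies: for this one checks that each $\stsp{x,S}$ is closed (using compactness of $S$ and Hausdorff-continuity of the spindle operation) and that its spindle convex hull is compact in the relevant cases (when $\crad\stsp{x,S}\leq 1$; in the contrary case $\sconv{\stsp{x,S}}=\ED$ and the set contributes nothing to the intersection), so that a standard finite intersection property argument applies. These topological properties of spindle convex sets are available from \cite{BLNP07}.
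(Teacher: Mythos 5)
Your proposal is correct and follows essentially the same route the paper indicates: Theorem~\ref{thm:krasnowellspindle} identifies $\kers S$ with $\bigcap_{x\in\bd S}\sconv{\stsp{x,S}}$, the hypothesis guarantees every $d+1$ of these convex sets have a common point, and Helly's theorem (together with the compactness considerations you supply) yields a common point of the whole family. The paper states only that the corollary follows from Theorem~\ref{thm:krasnowellspindle} combined with Helly's theorem, so the details you fill in (that spindle convex sets are convex, that the visibility regions are compact, and that the spindle convex hulls are either compact or all of $\ED$) are exactly the right ones.
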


Theorem~\ref{thm:krasnowellspindle} combined with Klee's version of Helly's
theorem \cite{Kl53} yields the following

\begin{cor}\label{thm:QKspindle}
 Let $S$ and $K$ be compact sets in $\ED$. Assume that for any $d+1$ points
$x_1,\ldots,x_{d+1}$ of $\bd S$, there is a vector $u\in \ED$ such that
$u+K\subseteq \bigcap_{i=1}^{d+1} \sconv{\stsp{x_i,S}}$. Then there is a vector 
$v\in \ED$ such that
$v+K\subseteq \kers S$.
\end{cor}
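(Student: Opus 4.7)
My plan is to combine Theorem~\ref{thm:krasnowellspindle} with Klee's translative version of Helly's theorem \cite{Kl53}. By Theorem~\ref{thm:krasnowellspindle},
$$\kers S = \bigcap_{x \in \bd S} C_x \quad \text{where} \quad C_x := \sconv{\stsp{x,S}},$$
so a translate $v+K$ lies in $\kers S$ if and only if $v+K \subseteq C_x$ for every $x \in \bd S$. Since each $C_x$ is spindle convex, it is an intersection of closed unit balls in $\ED$, and hence closed, convex, and of diameter at most $2$.

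Next I would reformulate the question in the standard Helly language by introducing, for each $x \in \bd S$,
$$A_x := \{v \in \ED \st v + K \subseteq C_x\} = \bigcap_{k \in K}(C_x - k).$$
Each $A_x$ is an intersection of closed convex translates of $C_x$, hence closed and convex; furthermore $A_x \subseteq C_x - K$, which is bounded because both $C_x$ and $K$ are bounded. Thus $\{A_x\}_{x \in \bd S}$ is a family of compact convex sets in $\ED$.

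The hypothesis of the corollary translates exactly into the statement that every subfamily $\{A_{x_1}, \ldots, A_{x_{d+1}}\}$ of size $d+1$ has nonempty intersection. Applying Helly's theorem to the compact convex family $\{A_x\}_{x \in \bd S}$ (or equivalently, invoking Klee's theorem directly on the family $\{C_x\}$ with covering body $K$) yields a common point $v \in \bigcap_{x \in \bd S} A_x$, and by construction this $v$ satisfies $v + K \subseteq \bigcap_{x \in \bd S} C_x = \kers S$. The substance of the argument is the reduction via Theorem~\ref{thm:krasnowellspindle}; the remaining verification that the $A_x$'s are compact and convex is routine, so I do not anticipate any serious obstacle beyond checking these regularity properties carefully.
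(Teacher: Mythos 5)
Your approach is exactly the one the paper takes: Corollary~\ref{thm:QKspindle} is presented as an immediate consequence of Theorem~\ref{thm:krasnowellspindle} together with Klee's translative Helly theorem \cite{Kl53}, and you have unpacked that combination correctly via the Minkowski-difference sets $A_x$. The only small imprecision is the claim that every $C_x=\sconv{\stsp{x,S}}$ has diameter at most $2$: when $\crad\stsp{x,S}>1$ one has $C_x=\ED$, so the corresponding $A_x$ need not be compact; but since $\kers S\subseteq S$ is bounded, not every $C_x$ can equal $\ED$, hence at least one $A_x$ is compact and the infinite-family version of Helly still applies.
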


The next theorem is a discrete relative of Theorem~\ref{thm:krasnowellspindle}. 
It is based on sets called {\it flowers} that have been introduced by Gordon 
and Meyer \cite{GoMe94} and have been studied by Csikos \cite{Cs01} as well 
from the point of view of the Kneser-Poulsen conjecture. Here, we need the 
following version of flowers. Let $F$ be a {\it lattice polynomial}, i.e., an 
expression built up from some variables using the binary operations $\cap$ and 
$\cup$ with properly placed brackets indicating the order of the evaluation of 
the operations. We identify two lattice polynomials if they can be obtained 
from one another using the commutativity and associativity of the operations 
$\cap$ and $\cup$. (Other lattice identities are not used in the 
identification.) Also, it will be convenient to write $F$ as $F(x_1, \dots , 
x_n)$ indicating the variables of $F$ by $x_1, \dots , x_n$. We shall assume 
that each variable $x_i$ occurs exactly once in $F$. A {\it flower-polyhedron} 
in $\ED$ is a set of the form $F( B_1, \dots , B_n)$, where $F$ is a lattice 
polynomial and the sets $B_1, \dots , B_n$ are closed unit balls in $\ED$. 
Finally, a flower-polyhedron in $\ED$ is called {\it reduced along its 
boundary} if the boundary of the flower-polyhedron intersects the boundary of 
each generating unit ball in a $(d-1)$-dimensional set.

\begin{thm}\label{thm:flowers}
Let $F( B_1, \dots , B_n)$ be a flower-polyhedron  reduced along its boundary 
in $\ED$. Then $\kers \left(F( B_1, \dots , B_n)\right) = \bigcap_{i=1}^{n}B_i 
$.
\end{thm}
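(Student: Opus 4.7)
I would prove the two inclusions $\bigcap_{i=1}^n B_i \subseteq \kers F$ and $\kers F \subseteq \bigcap_{i=1}^n B_i$ separately, and throughout let $c_i$ denote the centre of $B_i$.

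For the first inclusion, the key observation is that the lattice polynomial $F$, viewed as a Boolean function in the incidence variables $\mathbf{1}[z \in B_i]$ (with $\cap \leftrightarrow \wedge$ and $\cup \leftrightarrow \vee$), is monotone. Given $p \in \bigcap_i B_i$ and $q \in F$, I take any $r \in \spi{p, q}$ and note that for every index $i$ with $q \in B_i$ both $p$ and $q$ lie in the unit ball $B_i$, so $B_i$ is one of the unit balls defining $\spi{p, q}$, and hence $r \in B_i$. Thus the incidence vector of $r$ dominates that of $q$ coordinatewise, and monotonicity of $F$ forces $r \in F$. Hence $\spi{p, q} \subseteq F$, and so $p \in \kers F$.

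For the reverse inclusion I would apply Theorem~\ref{thm:krasnowellspindle} and reduce to exhibiting, for each $i$, a point $x_i \in \bd F$ with $\stsp{x_i, F} \subseteq B_i$: since $B_i$ is spindle convex this gives $\sconv{\stsp{x_i, F}} \subseteq B_i$, and Theorem~\ref{thm:krasnowellspindle} then forces $\kers F \subseteq B_i$, so that intersecting over $i$ yields the desired inclusion. The reducedness hypothesis guarantees that $\bd F \cap \bd B_i$ is $(d-1)$-dimensional, so I choose $x_i$ in its relative interior within $\bd B_i$. A short induction on the structure of the lattice polynomial $F$ shows that at such a \emph{nice} point $x_i$ only the sphere $\bd B_i$ contributes to $\bd F$ locally, and more precisely there is an open neighbourhood $U$ of $x_i$ satisfying $F \cap U \subseteq B_i$.

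To verify $\stsp{x_i, F} \subseteq B_i$ I take any $y \in F \setminus B_i$ and show $\spi{x_i, y} \not\subseteq F$. If $|x_i - y| > 2$ then $\spi{x_i, y} = \ED$, so assume $|x_i - y| \leq 2$. The main step is a tangent-cone computation at the tip $x_i$: writing $\spi{x_i, y}$ as the intersection of unit balls with centres $u$ satisfying $|u - x_i| \leq 1$ and $|u - y| \leq 1$, only those $u$ with $|u - x_i| = 1$ constrain the tangent cone at $x_i$, and a direct calculation identifies this tangent cone as the closed circular cone with axis $e = (y - x_i)/|y - x_i|$ and half-angle $\arcsin(|y - x_i|/2)$. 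A short spherical computation using $|x_i - c_i| = 1$ then shows that this cone contains a direction $v$ with $\langle v, x_i - c_i\rangle > 0$ if and only if $|y - c_i| > 1$, i.e.\ $y \notin B_i$. Selecting such a $v$ in the interior of the tangent cone, the point $x_i + t v$ lies in $\spi{x_i, y}$ for all sufficiently small $t > 0$ and satisfies $|x_i + t v - c_i| > 1$, placing it outside $B_i$ and hence outside $F$ by the local containment. This tangent-cone identification---equating the spindle's outward emergence from $B_i$ at its tip $x_i$ with the non-membership $y \notin B_i$---is the main technical obstacle of the proof.
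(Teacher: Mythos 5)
Your proof is correct, and it differs from the paper's in both halves of the argument in ways worth noting.

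For the inclusion $\bigcap_i B_i \subseteq \kers F$, the paper argues by induction on the structure of the lattice polynomial, splitting $F = G \cap H$ or $F = G \cup H$ and invoking the inductive hypothesis on $G$ and $H$; this requires asserting (without proof) that the sub-flower-polyhedra $G$ and $H$ are again reduced. Your monotone-Boolean-function observation is cleaner: it establishes the inclusion in one stroke, makes no use of reducedness at all for this direction, and in particular sidesteps the question of whether sub-polynomials inherit reducedness. This is a genuine simplification.

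For the inclusion $\kers F \subseteq \bigcap_i B_i$, the paper gives a direct contradiction argument: assuming $x \in \kers F \setminus B_{i_0}$, reducedness supplies a point $y$ with $\B(y,\epsilon)\cap\bd B_{i_0}=\B(y,\epsilon)\cap\bd F$, and then the paper simply asserts that $\spi{y,x}$ contains an arc starting at $y$ that leaves $B_{i_0}$. You instead route the argument through Theorem~\ref{thm:krasnowellspindle} (showing $\stsp{x_i,F}\subseteq B_i$ and taking spindle convex hulls), which is heavier machinery than necessary — the direct contradiction works once you have $F\cap U\subseteq B_i$ and the fact that $\spi{x_i,y}$ escapes $B_i$ at its tip, since $x_i\in F$ already witnesses $x\notin\kers F$. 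On the other hand, your tangent-cone computation (the cone of $\spi{x_i,y}$ at $x_i$ has axis $(y-x_i)/|y-x_i|$ and half-angle $\arcsin(|y-x_i|/2)$, and contains an outward direction relative to $B_i$ iff $|y-c_i|>1$) supplies a rigorous proof of exactly the step the paper leaves as a bare assertion, and I verified that the angle inequality indeed reduces to $|y-c_i|>1$. So your argument trades a leaner deduction in the first half and a more detailed verification in the second half against a somewhat unnecessary detour through Theorem~\ref{thm:krasnowellspindle}. Both proofs ultimately use reducedness in the same way: to find a boundary point $x_i$ of $F$ on $\bd B_i$ near which $F$ lies on the $B_i$ side of $\S(c_i)$.
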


Clearly, Theorem~\ref{thm:flowers} leads to a geometric construction of the 
family of flower-polyhedra with $d$-dimensional spindle kernels in $\ED$. 
The set $\SD$ of all compact spindle starshaped sets in $\ED$ equipped with the 
Hausdorff distance is a Baire space by Baire's Category Theorem, since it is
a complete metric space. Now, recall that a property is called \emph{typical} 
for a member of a Baire space, if the set of
those members that do not have the property is of category one, i.e., they are 
a union of countably many nowhere dense sets. Although it is easy to see that 
flower-polyhedra with $d$-dimensional spindle kernels in $\ED$ form an 
everywhere dense set in $\SD$, still the following theorem holds, which in 
fact, is a spindle analogue of a theorem of Zamfirescu \cite{Za88} claiming 
that the kernel of a typical starshaped set is a singleton. 

\begin{thm}\label{thm:zamfi}
 The spindle kernel of a typical compact spindle starshaped set of $\ED, d\geq 
2$ is a singleton.
\end{thm}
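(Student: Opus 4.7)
For each positive integer $n$, let $E_n = \{ S \in \SD : \operatorname{diam}(\kers S) \geq 1/n \}$. The set of compact spindle starshaped sets whose spindle kernel is not a singleton is exactly $\bigcup_{n \geq 1} E_n$, so it suffices to show that each $E_n$ is closed and nowhere dense in $\SD$.

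For closedness, I would establish upper semi-continuity of the spindle kernel as a set-valued map on $\SD$. If $S_k \to S$ in the Hausdorff metric, $p_k \in \kers(S_k)$ and $p_k \to p$, then for each $y \in S$ I choose $y_k \in S_k$ with $y_k \to y$; from $\spi{p_k, y_k} \subseteq S_k$ and Hausdorff-continuity of $(a,b) \mapsto \spi{a,b}$ on pairs with $|a-b| \leq 2$ (automatic eventually since each $S_k$ is compact), I obtain $\spi{p, y} \subseteq S$ in the limit, hence $p \in \kers S$. Applied to sequences $p_k, q_k \in \kers S_k$ with $|p_k - q_k| \geq 1/n$, this yields limit points $p, q \in \kers S$ with $|p - q| \geq 1/n$, so $S \in E_n$.

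For nowhere density, given $S \in E_n$ and $\epsilon > 0$, I would produce $S' \in \SD$ with $d_H(S, S') < \epsilon$ and $\operatorname{diam}(\kers S') < 1/n$; by the closedness above, $S'$ then lies in the interior of $\SD \setminus E_n$. The construction exploits the density in $\SD$ of reduced flower-polyhedra with $d$-dimensional spindle kernel (noted just before the theorem statement). I first approximate $S$ within $\epsilon/2$ by such a flower-polyhedron $F = F(B_1, \dots, B_N)$; by Theorem~\ref{thm:flowers}, $\kers F = \bigcap_{i=1}^N B_i$. Fix $z$ in the interior of this intersection, let $c_i$ denote the center of $B_i$ and set $u_i = (c_i - z)/|c_i - z|$; replace each $B_i = \B[c_i, 1]$ by the unit ball $B_i^{(\mu)}$ centered at $c_i + \mu u_i$. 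As $\mu$ increases from $0$ all centers drift radially away from $z$, so $\bigcap_i B_i^{(\mu)}$ shrinks continuously and becomes empty at some critical $\mu^\star$; choosing $\mu$ slightly below $\mu^\star$ makes this intersection non-empty with diameter less than $1/n$, while keeping $F^{(\mu)} := F(B_1^{(\mu)}, \dots, B_N^{(\mu)})$ within $\epsilon/2$ of $F$ by Hausdorff-continuity of the lattice operations on unit balls in generic position.

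The main obstacle is that Theorem~\ref{thm:flowers} requires $F^{(\mu)}$ to be reduced along its boundary in order to identify $\kers F^{(\mu)} = \bigcap_i B_i^{(\mu)}$. This is a transversality condition preserved by generic perturbations of the centers; if needed, an additional arbitrarily small generic rotation of the directions $u_i$ restores the required transversality of each $\bd B_i^{(\mu)}$ with $\bd F^{(\mu)}$. Once this is in hand, $S' := F^{(\mu)}$ satisfies $d_H(S, S') < \epsilon$ and $\operatorname{diam}(\kers S') < 1/n$, completing the argument.
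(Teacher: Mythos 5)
Your proposal is a genuinely different argument from the paper's, which avoids flower-polyhedra entirely. Given a set $S$ in the class $\SD_n$ of compact spindle starshaped sets whose kernel contains a ball of radius $1/n$, and $\varepsilon>0$, the paper picks $x\in\inter\kers S$, a line $\ell$ through $x$ with $\ell\cap S=[u,v]$, and points $u',v'$ on $\ell$ just beyond $u,v$; the set $S'=S\cup\spi{x,u'}\cup\spi{x,v'}$ then has $\kers S'=\{x\}$, lies within $\varepsilon$ of $S$ in Hausdorff distance, and a whole Hausdorff neighborhood of $S'$ misses $\SD_n$. That yields nowhere density of $\SD_n$ in one stroke, with no separate closedness step needed. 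Your upper-semicontinuity argument that $E_n$ is closed is fine (the map $(a,b)\mapsto\spi{a,b}$ is indeed Hausdorff continuous on $\{|a-b|\le 2\}$, fattening to a full unit ball as $|a-b|\nearrow 2$).

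The nowhere-density step of your proposal, however, has a genuine gap: nothing forces the value of $\mu$ achieving $\operatorname{diam}\bigl(\bigcap_i B_i^{(\mu)}\bigr)<1/n$ to also satisfy $d_H(F,F^{(\mu)})<\epsilon/2$. If several centers $c_i$ coincide -- perfectly legal in a reduced flower-polyhedron, e.g.\ $F(B,B)=B$, whose spindle kernel is all of $B$ -- they drift together under your radial move, so $\bigcap_i B_i^{(\mu)}$ never shrinks and $\mu^\star=\infty$; if they merely nearly coincide, $\mu$ must be taken on the order of $1$, not small. Even when shrinkage does occur, $\mu\mapsto F^{(\mu)}$ is far from Lipschitz in Hausdorff distance in exactly the regime you are driving it to: for two unit balls with $|c_1-c_2|=2a$ the tips of the lens $B_1\cap B_2$ sit at height $\sqrt{1-a^2}$, and their displacement under a center shift of $\mu$ is of order $a\mu/\sqrt{1-a^2}$, which blows up as $a\to 1$. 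So squeezing $\bigcap_iB_i^{(\mu)}$ down to diameter $<1/n$ is precisely what destroys the $\epsilon/2$-closeness you need. The density of reduced flower-polyhedra in $\SD$ gives no control over $\operatorname{diam}(\bigcap_i B_i)$ for the approximant $F$ (take $S=\B[c]$, whose natural approximant is $\B[c]$ itself with kernel the whole ball), and the ``generic transversality'' needed to keep $F^{(\mu)}$ reduced so that Theorem~\ref{thm:flowers} applies is also not established -- after perturbation some $\bd B_i^{(\mu)}$ may simply stop contributing to $\bd F^{(\mu)}$ at all.
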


Next, we prove an analogue of a result of Toranzos and Forte Cunto 
\cite{FoTo97} that
characterizes locally kernel points of starshaped sets. Let $S\subset\ED$ be a 
compact set. A point $x$ of $S$ is a \emph{spindle peak}
of $S$ if there is a neighborhood $U$ of $x$ such that for any $x^\prime\in U$
we have $\stsp{x^\prime}\subseteq \stsp{x}$.

\begin{thm}\label{thm:spindleToCu}
 Let $S\subset\ED$ be a compact set such that $\cl(\inter S)=S$ and $\inter S$
is connected. Assume that $x\in S$ is a spindle peak of $S$. Then $x\in\kers S$
(and hence, $S$ is necessarily spindle starshaped).
\end{thm}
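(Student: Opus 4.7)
The plan is to prove $A:=\stsp{x,S}$ equals $S$, which is exactly the assertion $x\in\kers{S}$. First I would record three preliminary observations. (i) $A$ is closed in $S$: the map $y\mapsto\spi{x,y}$ is continuous in the Hausdorff metric, and since $S$ is closed, $\spi{x,z_n}\subseteq S$ together with $z_n\to z$ forces $\spi{x,z}\subseteq S$. (ii) The spindle peak hypothesis gives $U\cap S\subseteq A$, since any $x'\in U\cap S$ satisfies $x'\in\stsp{x'}\subseteq\stsp{x}=A$. (iii) $U\cap\inter S$ is nonempty, because $x\in S=\cl(\inter S)$ and $U$ is an open neighborhood of $x$.

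Since $A$ is closed in $S$ and $S=\cl(\inter S)$, it suffices to prove $\inter S\subseteq A$. Set $W:=A\cap\inter S$: this set is nonempty by (ii) and (iii) and closed in $\inter S$ by (i). As $\inter S$ is connected, showing that $W$ is relatively open in $\inter S$ will force $W=\inter S$, and then $A=\cl(\inter S)=S$ follows at once.

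The openness of $W$ is the main obstacle. Fix $z\in W$; the task is to verify $\spi{x,z'}\subseteq S$ for every $z'\in\inter S$ sufficiently close to $z$. If this failed, there would exist $z_n\in\inter S$ with $z_n\to z$ and points $p_n\in\spi{x,z_n}\setminus S$. Passing to a subsequence, $p_n\to p$, and continuity of the spindle together with $\spi{x,z}\subseteq S$ forces $p\in\spi{x,z}\cap\bd S$; in particular the spindle $\spi{x,z}$ touches $\bd S$ tangentially at $p$. To invoke the peak hypothesis for a contradiction, I would construct $x'\in U\cap S$ with $\spi{x',z_n}\subseteq S$ for some large $n$: the peak would then give $z_n\in\stsp{x'}\subseteq\stsp{x}=A$, contradicting $p_n\notin S$. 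The heart of the argument is producing such an $x'$ by a local analysis at the tangency point $p$: using the freedom $U\cap\inter S\neq\emptyset$, one perturbs $x$ inside $U$ in a direction that pushes the boundary of the spindle strictly inward relative to $\bd S$ near $p$, while compactness and continuous dependence of $\spi{\cdot,\cdot}$ on its endpoints keep the remainder of the spindle inside $S$ for $n$ large. This is the spindle analogue of the standard linear-case argument of Toranzos and Forte Cunto, with spindles playing the role of segments.

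Granting openness, connectedness of $\inter S$ yields $W=\inter S$, so $\inter S\subseteq A$; then the closedness of $A$ in $S$ combined with $\cl(\inter S)=S$ gives $A=S$, i.e.\ $x\in\kers{S}$. In particular $\kers{S}\neq\emptyset$, so $S$ is spindle starshaped, as claimed.
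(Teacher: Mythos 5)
Your overall framework---exploit $\cl(\inter S)=S$, the connectedness of $\inter S$, and the clopen argument---is in the same spirit as the paper's, and observations (i)--(iii) are correct. But the entire difficulty of the theorem is concentrated in the step you label ``the openness of $W$ is the main obstacle,'' and there you do not give a proof: you describe a hoped-for construction and assert that it works. Worse, the construction you sketch is aimed at the wrong point. You want to perturb $x$ to some $x'\in U$ so that $\spi{x',z_n}\subseteq S$, arguing locally at a point $p\in\bd S$ where $\spi{x,z}$ touches the boundary. But $p$ lies on $\bd S$, about whose local structure you have no control; $\bd S$ need not be smooth, the tangency need not be isolated, and the spindle $\spi{x,z}$ may graze $\bd S$ along a whole set. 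There is no reason a ``direction that pushes the boundary of the spindle strictly inward relative to $\bd S$ near $p$'' should exist, and the compactness/continuity appeal for the ``remainder of the spindle'' is equally unsubstantiated since that remainder may also touch $\bd S$. So this is a genuine gap, not a technical detail to be filled in routinely.

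The paper's proof sidesteps $\bd S$ entirely. It sets $M=\inter(\stsp{x})$ (which is open by fiat, so no openness needs proving), assumes for contradiction $M\neq\inter S$, and uses connectedness of $\inter S$ to produce a point $t\in\inter S\cap\bd M$. The crucial point is that $t$ is an \emph{interior} point of $S$, so one has a full ball $B=\B[t,\varepsilon]\subseteq\inter S$ available, and also $t\in\stsp{x}$ (as $\stsp{x}$ is closed), so $\spi{x,t}\subseteq S$. Thus $B\cup\spi{x,t}\subseteq S$, and one can explicitly pick $x'\in(x,t)$ arbitrarily close to $x$ and $t'$ just beyond $t$ on the ray $\overrightarrow{xt}$ so that $\spi{x',t'}\subseteq B\cup\spi{x,t}\subseteq S$ and $t\in\inter\spi{x',t'}$. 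Then $t\in\inter\stsp{x'}$ while $t\notin M=\inter\stsp{x}$, so $\stsp{x'}\not\subseteq\stsp{x}$, contradicting the peak hypothesis. The key move you are missing is to locate the bad boundary point on $\bd M$ inside $\inter S$ rather than on $\bd S$; only then does one have the interior ball needed to make a concrete perturbation.
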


Recently Bobylev \cite{Bo99}, \cite{Bo01} proved an elegant version of Helly's 
theorem for starshaped sets: Let $S_i\;\; (i\in I)$ be a family of compact 
starshaped sets in $\ED$
with $\card I\geq d+1$. Assume that for every $i_1,\ldots,i_{d+1}\in I$ the 
intersection $\cap_{j=1}^{d+1} S_{i_j}$ is starshaped. Then the intersection
$\cap_{i\in I} S_i$ is starshaped. Breen \cite{Br03} has strengthened that 
result as follows:  Let $S_i\;\; (i\in I)$ be a family of compact starshaped 
sets with $\card I\geq
d+1$ and $K$ a compact set in $\ED$. Assume that for every
$i_1,\ldots,i_{d+1}\in I$ there is a vector $u\in \ED$ such that 
$u+K\subseteq \ker\left(\mathop{\cap}\limits_{j=1}^{d+1}  S_{i_j}\right)$.
Then there is a vector $v\in \ED$ such that $v+K\subseteq \ker 
\left(\mathop{\cap}\limits_{i\in I} S_i\right)$. We prove the following spindle 
analogue of these results.

\begin{thm}\label{thm:QBs}
 Let $S_i\;\; (i\in I)$ be a family of compact spindle starshaped sets with 
$\card I\geq d+1$ and $K$ a compact set in $\ED$. Assume that for every
$i_1,\ldots,i_{d+1}\in I$ there is a vector $u\in \ED$ such that
$u+K\subseteq \kers\left(\mathop{\cap}\limits_{j=1}^{d+1}  S_{i_j}\right)$. 
Then there is a vector $v\in \ED$ such that $$v+K\subseteq \kers 
\left(\mathop{\bigcap}\limits_{i\in I} S_i\right).$$
\end{thm}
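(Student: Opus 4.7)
The plan is to mimic the Bobylev--Breen proof of the classical analogue, with Theorem~\ref{thm:krasnowellspindle} replacing Krasnosselsky's characterization~\eqref{eq:krasnowell} and Corollary~\ref{thm:QKspindle} replacing the Klee version of Helly's theorem at the outermost level. Set $S := \bigcap_{i\in I} S_i$, which is compact. By Corollary~\ref{thm:QKspindle} applied to $S$, it suffices to produce, for every choice of $d+1$ points $x_1, \ldots, x_{d+1} \in \bd S$, a vector $u \in \ED$ with
\[
u + K \subseteq \bigcap_{k=1}^{d+1} \sconv{\stsp{x_k, S}},
\]
at which point Corollary~\ref{thm:QKspindle} yields the desired $v$ with $v + K \subseteq \kers S$.

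First I would reduce to the case of finite index sets by a Hausdorff-compactness argument. Fix an exhausting chain of finite subsets $F_1 \subseteq F_2 \subseteq \cdots \subseteq I$ with $|F_1| \geq d+1$ and $\bigcup_n F_n = I$, and let $S_n := \bigcap_{i \in F_n} S_i$. Assuming the finite case, each $S_n$ admits a vector $v_n$ with $v_n + K \subseteq \kers S_n$; since $v_n + K \subseteq S_n \subseteq S_1$ and $S_1$ is compact, the sequence $(v_n)$ is bounded and has a convergent subsequence $v_n \to v^\ast$. Using the continuity of the $1$-spindle $\spi{w, q}$ in its endpoints (in the Hausdorff metric), the inclusions $\spi{w_n, q} \subseteq S_n$ for fixed $q \in S$ and $w_n \in v_n + K$ with $w_n \to w$, and the closedness of each $S_i$, one concludes that $\spi{w, q} \subseteq S_i$ for every $i \in I$, hence $\spi{w, q} \subseteq S$, so $v^\ast + K \subseteq \kers S$.

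For the finite case $I = \{1, \ldots, m\}$ with $m \geq d+1$, I would induct on $m$. The base case $m = d+1$ is the hypothesis. For the step from $m$ to $m+1$, the inductive hypothesis provides, for each index $i \in \{1,\ldots,m+1\}$, a $K$-translate inside $\kers\bigl(\bigcap_{j \neq i} S_j\bigr)$; to combine these into a single $K$-translate inside $\kers\bigl(\bigcap_{j=1}^{m+1} S_j\bigr)$, I would apply Corollary~\ref{thm:QKspindle} to $\bigcap_{j=1}^{m+1} S_j$ and verify its hypothesis for each $(d+1)$-tuple of boundary points by a further application of Klee's Helly theorem \cite{Kl53} to the convex sets $\sconv{\stsp{x_k, \cdot}}$ (recall that a spindle convex set is automatically convex).

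The main obstacle is this last verification: the natural $K$-translates supplied by the inductive hypothesis sit in spindle convex hulls $\sconv{\stsp{x_k, T}}$ for $T$ an $m$-wise sub-intersection properly containing $\bigcap_{j=1}^{m+1} S_j$, and the trivial inclusion $\sconv{\stsp{x_k, \bigcap_{j=1}^{m+1} S_j}} \subseteq \sconv{\stsp{x_k, T}}$ points the wrong direction. Overcoming this mismatch requires simultaneously combining the $K$-translates arising from all $m+1$ of the $m$-wise sub-intersections via Klee--Helly, and is the combinatorial heart of the argument.
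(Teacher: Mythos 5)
Your opening move — apply Corollary~\ref{thm:QKspindle} to $S_\ast=\bigcap_{i\in I}S_i$ so that the goal becomes producing, for each $(d+1)$-tuple $x_1,\dots,x_{d+1}$ of points of $S_\ast$, a translate of $K$ inside $\bigcap_{t}\sconv{\stsp{x_t,S_\ast}}$ — is exactly the paper's first step. Everything after that diverges, and the obstruction you flag at the end is a genuine gap, not merely unfinished bookkeeping: inducting on the size of a finite index set fails precisely because the inductive hypothesis hands you $K$-translates sitting in $\kers$ of \emph{larger} sub-intersections, and $\kers$ (or $\sconv{\stsp{x,\cdot}}$) is monotone in the \emph{wrong} direction for this. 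There is no reason a $K$-translate of $\kers\bigl(\bigcap_{j\neq i}S_j\bigr)$ should land in $\bigcap_{j=1}^{m+1}S_j$ at all, and Klee--Helly applied to the sets $\sconv{\stsp{x_k,\cdot}}$ does not repair this, because the sets themselves change as you shrink the ambient intersection. Likewise, the proposed compactness reduction to finite $I$ is not needed and does not address the difficulty.

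The paper's route avoids induction on $|I|$ entirely. After fixing the $d+1$ points $x_1,\dots,x_{d+1}\in S_\ast$, it introduces for each $i\in I$ the auxiliary set
\[
T_i=\{x\in S_i\st \spi{x,x_t}\subseteq S_i \ \text{ for all }1\le t\le d+1\},
\]
and observes that the goal is equivalent to $\bigl(\bigcap_{i\in I}T_i\bigr)\sim K\ne\emptyset$, where $\sim$ is the Minkowski difference. One then applies the topological Helly theorem directly to the whole family $\{T_i\sim K\}_{i\in I}$: it suffices to show that each $(d+1)$-fold intersection $T^0=\bigcap_{l=1}^{d+1}(T_{i_l}\sim K)$ is spindle starshaped (hence contractible and non-empty). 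For this the paper sets $S^0=\bigcap_{l=1}^{d+1}(S_{i_l}\sim K)$ — which is non-empty, compact, and spindle starshaped by the \emph{hypothesis} of the theorem — and proves the inclusion $\kers S^0\subseteq\kers T^0$. That last inclusion is where the real geometry happens: given $x\in\kers S^0$ and $y\in T^0$, one checks $\spi{x,y}\subseteq T^0$ by passing to spindle convex hulls and invoking Sublemma~\ref{sublem:spindlecvxhull} (the ``built-from-bottom-up'' description of $\sconv{\cdot}$). This is the missing idea in your proposal: replace the induction on $|I|$ by a single application of topological Helly to the sets $T_i\sim K$, with the Helly hypothesis verified via $\kers S^0\subseteq\kers T^0$ and the spindle convex hull machinery.
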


In \cite{Br05}, Breen proves that if every countable subfamily of a family 
$\FF$ of
starshaped sets (not necessarily compact) has a (non-empty) starshaped
intersection, then the intersection of $\FF$ is also starshaped.
The analogous statement for spindle starshaped sets follows. The proof
in our setting --since spindle starshaped sets are ''fat''-- is somewhat simpler
than Breen's. The main idea is to study the trace of our set family on
$\QD$.

\begin{thm}\label{thm:spindlebreengeneral}
  Let $K$ be a set in $\ED$ and $\FF$ be a family of spindle starshaped sets in
$\ED$ with the property that  the intersection of any countable subfamily of
$\FF$ is a spindle starshaped set whose spindle kernel contains a translate of
$K$. Then $\kers \bigcap \FF$ also contains a translate of $K$.
\end{thm}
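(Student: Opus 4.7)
Since the hypothesis only constrains countable subfamilies, the plan is to extract a countable $\FF' \subseteq \FF$ with $\bigcap \FF' = \bigcap \FF$; the hypothesis applied to $\FF'$ then delivers a vector $v \in \ED$ with $v + K \subseteq \kers \bigcap \FF' = \kers \bigcap \FF$, which is exactly what is required.

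To construct $\FF'$ we use the ``trace on $\QD$''. Suppose first (a reduction discussed below) that every $F \in \FF$ is closed. For each pair $(p,r) \in \QD \times \Qe_{>0}$ set
\[
  \FF_{p,r} := \{F \in \FF : F \cap \B(p,r) = \emptyset\},
\]
and whenever $\FF_{p,r} \neq \emptyset$ select a single representative $F_{p,r} \in \FF_{p,r}$. Let $\FF'$ be the (at most countable) collection of these representatives. The inclusion $\bigcap \FF \subseteq \bigcap \FF'$ is trivial since $\FF' \subseteq \FF$. For the reverse inclusion, suppose $x \notin \bigcap \FF$, so $x \notin F$ for some $F \in \FF$; since $F$ is closed and $\QD$ is dense in $\ED$, there is a rational ball $\B(p,r) \ni x$ entirely contained in $\ED \setminus F$. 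Then $F \in \FF_{p,r}$, the chosen $F_{p,r} \in \FF'$ is disjoint from $\B(p,r)$, and hence $x \notin F_{p,r}$, so $x \notin \bigcap \FF'$. This establishes the key equality $\bigcap \FF' = \bigcap \FF$.

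The main obstacle is the preliminary reduction to closed members of $\FF$: without closedness, a point outside $F$ need not lie in any rational ball avoiding $F$, and the trace argument breaks. This is precisely where the remark in the introduction that ``spindle starshaped sets are fat'' is used --- any non-trivial spindle starshaped set is bounded (of diameter at most $4$, since its spindle kernel is spindle convex and therefore has circumradius at most $1$) and its spindle kernel is already closed. These structural properties make replacing each $F \in \FF$ by $\cl F$ mild enough to preserve both the spindle starshapedness of countable intersections and the translate-of-$K$ condition on their spindle kernels, after which the rational-ball construction above applies and finishes the proof.
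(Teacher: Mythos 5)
Your central plan — extract a countable $\FF'\subseteq\FF$ with $\bigcap\FF'=\bigcap\FF$ and then invoke the hypothesis on $\FF'$ — does not work, and the gap you flag at the end is not patchable in the way you propose. A concrete counter\-example: let $S=\B[o,\varepsilon]$ be a closed disc of small radius $\varepsilon<1$ in the plane, and for each $\alpha\in\bd S$ set $F_\alpha=S\setminus\{\alpha\}$. Each $F_\alpha$ is spindle starshaped (the center $o$ is a spindle star center, since $\spi{o,q}$ for $q\in S\setminus\{\alpha\}$ never contains the boundary point $\alpha$), and for every countable index set $\{\alpha_n\}$ the intersection $\bigcap_n F_{\alpha_n}=S\setminus\{\alpha_n:n\}$ is again spindle starshaped with $o$ in its spindle kernel, so the hypotheses of the theorem hold with $K=\{o\}$. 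Yet $\bigcap_\alpha F_\alpha=\inter S$, while every countable subfamily has a strictly larger intersection (because $\bd S$ is uncountable). Thus no countable $\FF'$ with $\bigcap\FF'=\bigcap\FF$ exists, and the ``rational balls'' selection can never be completed. Replacing each $F$ by $\cl F$ does not repair this: $\bigcap_F\cl F$ can be strictly larger than $\bigcap_F F$ (indeed in the example it is $S$, not $\inter S$), so a translate of $K$ inside $\kers\bigcap_F\cl F$ says nothing about $\kers\bigcap_F F$. Also, your side claim that the spindle kernel is always closed is false — for instance $\kers[0,1)=[0,1)$ in $\Re^1$, and in the example above $\kers F_\alpha$ is not closed either.

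The paper's proof avoids the countable\-reduction idea altogether. It does not shrink $\FF$ to a countable family; instead it \emph{enlarges} the constraints by replacing each member of $\FF$ with its intersection with countably many chosen members (one per rational point $q$ for which some member omits $q$), producing a family $\FF_1$ all of whose members are themselves countable intersections of members of $\FF$ and all of whose members have the \emph{same} trace on $\QD$. The argument then proceeds via a ``rational spindle kernel'' $\kers^{\Qe}$ and the Carath\'eodory\-type Lemma~\ref{lem:spindlecvxhull-general}, showing that the common rational trace of the kernels has nonempty interior relative to $\QD$ and that any such rational interior point (or a suitable Minkowski\-difference version, for general $K$) lies in $\kers\bigcap\FF$. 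The ``fatness'' remark in the introduction refers precisely to this step — spindle kernels with more than one point have nonempty interior, which is what makes the rational\-trace argument close — not to any boundedness or closedness that would enable a countable reduction.
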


Finally, we consider the problem of guarding only certain points in a planar art 
gallery -- a question known as ``Treasures in an art gallery'' in computational 
geometry \cite{urrutia_art_2000}. 
We characterize the case when a single guard suffices. 
We prove both a linear and a spindle version of the result. To our knowledge, 
both have been unknown.

\begin{thm}\label{thm:artgal}
 Let $S$ be a compact, simply connected set in the plane and $A$ some finite 
subset of $S$. Assume that for any three points of $A$ there is a point in $S$ 
from which each one is visible within $S$. Then there is a point in $S$ that 
can see all points of $A$.
\end{thm}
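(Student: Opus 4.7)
The plan is to recast the statement as a finite-family Helly-type problem for the visibility regions of the points of $A$ and then apply a classical result of Molnár on simply connected planar sets. For each $a \in A$, set $T_a := \star{a, S} = \{q \in S : [a, q] \subseteq S\}$; a single point $y \in S$ sees every element of $A$ precisely when $y \in \bigcap_{a \in A} T_a$, so the theorem reduces to showing this finite intersection is non-empty.

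First I would record the basic properties of the sets $T_a$: each is closed in the compact set $S$ (the condition $[a, q] \subseteq S$ is closed under limits of $q$), hence compact; and each is star-shaped from $a$, hence contractible and in particular simply connected. The hypothesis then reads: every three of the sets $T_a$ share a common point. I would then invoke Molnár's Helly-type theorem for planar simply connected sets: a finite family of compact simply connected sets in the plane whose pairwise intersections are connected and whose three-wise intersections are non-empty has a common point. The triple-wise hypothesis is exactly given, so the remaining content is the connectedness of each pairwise intersection $T_a \cap T_b$.

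I expect this pairwise connectedness claim to be the main obstacle. Given $p, q \in T_a \cap T_b$, the four segments $[p, a], [a, q], [q, b], [b, p]$ all lie in $S$, so the closed polygonal loop $\gamma$ formed by them is contained in $S$. Because $S$ is simply connected in the plane, any simple sub-loop of $\gamma$ bounds a topological disk contained in $S$; from such a two-dimensional patch one can construct a path in $T_a \cap T_b$ from $p$ to $q$ by interpolating between the polygonal path $p \to a \to q$ (which lies in $T_a$ by star-shapedness from $a$) and the path $p \to b \to q$ (which lies in $T_b$), using the planar Jordan structure to ensure that intermediate points admit segments to both $a$ and $b$ inside $S$. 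The case where $\gamma$ self-intersects reduces, after a finite induction on the number of crossings, to the simple-loop case. Once this sublemma is in hand, Molnár's theorem immediately produces a point in $\bigcap_{a \in A} T_a$, giving the required guard.
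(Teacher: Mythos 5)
Your overall strategy matches the paper's: reduce the statement to a planar topological Helly theorem (Moln\'ar) applied to the visibility regions $\star{a,S}$, $a\in A$, after checking that these are compact and simply connected and that their pairwise intersections are connected. The three-wise non-emptiness is indeed the hypothesis, and you correctly identify that the pairwise connectedness of $\star{a,S}\cap\star{b,S}$ is the real content.

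The gap is in your sketch of that connectedness claim, and it is a genuine one. The ``interpolation'' between the two polygonal paths $p\to a\to q$ and $p\to b\to q$ is not an argument: already the two reference paths are not themselves contained in $\star{a,S}\cap\star{b,S}$ (the first passes through $a$, which need not lie in $\star{b,S}$; symmetrically for the second), so it is unclear what family of paths you are interpolating along, and there is no reason given why any intermediate point should see both $a$ and $b$ inside $S$. Saying ``the planar Jordan structure ensures'' this is exactly the step that needs a proof, and as stated it is false for the straight-line homotopy: the region bounded by the quadrilateral $p,a,q,b$ can contain points from which the segment to $a$ (or to $b$) leaves that region through a non-adjacent side, and nothing forces such a segment to stay in $S$.

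The paper closes this gap by introducing geodesic convexity with respect to $S$: one shows that $\star{x,S}$ is geodesically convex (the $S$-geodesic between any two of its points stays inside it; this uses a short argument splitting the geodesic at its first meetings with the rays from $x$ through the two endpoints, together with simple connectedness of $S$), that intersections of geodesically convex sets are geodesically convex, and that geodesically convex sets are path-connected (via the geodesic) and simply connected. This yields all the hypotheses of the topological Helly theorem in one stroke. Your proof would be repaired by replacing the interpolation idea with this geodesic argument: the $S$-geodesic from $p$ to $q$ is the explicit path in $\star{a,S}\cap\star{b,S}$.
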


\begin{thm}\label{thm:artgalspin}
 Let $S$ be a compact, simply connected set of diameter at most 2 in the plane 
and $A$ some finite subset of $S$. Assume that for any three points of $A$ 
there is a point in $S$ from which each one is visible within $S$ via a 
spindle. Then 
there is a point in $S$ that can see all points of $A$ via spindles.
\end{thm}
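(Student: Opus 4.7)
My plan is to apply a Helly-type theorem of Moln\'ar for compact simply connected planar sets --- if every pairwise intersection in a finite family of such sets is connected and every triple intersection is non-empty, then the full intersection is non-empty --- to the family of spindle-visibility regions
\[V_a := \stsp{a, S} = \{p \in S \st \spi{a,p} \subseteq S\}, \quad a \in A.\]
A common point of $\bigcap_{a \in A} V_a$ is precisely a single point of $S$ that spindle-sees every member of $A$, which is exactly what the theorem asks for. The hypothesis immediately supplies Moln\'ar's triple-intersection condition: a point $y \in S$ that spindle-sees $a_1, a_2, a_3$ lies in $V_{a_1} \cap V_{a_2} \cap V_{a_3}$, and in particular taking $a_3 = a_1$ shows that each pair $V_{a_1} \cap V_{a_2}$ is non-empty.

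I next verify that each $V_a$ is compact and simply connected. Compactness follows from compactness of $S$ together with the Hausdorff continuity of the spindle map $(x,y) \mapsto \spi{x,y}$ and closedness of $S$. For simple connectedness I show that $V_a$ is linearly star-shaped from $a$: for any $p \in V_a$, the spindle $\spi{a,p}$ is an intersection of balls and hence convex, so $[a,p] \subseteq \spi{a,p} \subseteq S$; and for any $q \in [a,p] \subseteq \spi{a,p}$, spindle convexity of $\spi{a,p}$ yields $\spi{a,q} \subseteq \spi{a,p} \subseteq S$, so $q \in V_a$. A planar star-shaped set is simply connected.

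The main obstacle is showing connectedness of each pairwise intersection $V_a \cap V_b$. This is where the hypothesis $\mathrm{diam}(S) \leq 2$ is used in an essential way: it guarantees that every spindle $\spi{x,y}$ with $x,y \in S$ is a well-defined bounded lens rather than all of $\ED$. I would first establish that $V_a \cup V_b$ is simply connected, combining the simple connectedness of $S$, the star-shapedness of $V_a$ from $a$ and $V_b$ from $b$, and a deformation argument contracting any loop in $V_a \cup V_b$ by pushing it via the pivots $a$ and $b$. Given this, the Mayer--Vietoris sequence
\[H_1(V_a \cup V_b) \to H_0(V_a \cap V_b) \to H_0(V_a) \oplus H_0(V_b) \to H_0(V_a \cup V_b) \to 0\]
combined with the vanishing of the first term and the computation $H_0(V_a) \oplus H_0(V_b) = \Ze^2$, $H_0(V_a \cup V_b) = \Ze$, forces $H_0(V_a \cap V_b) = \Ze$, that is, connectedness of the (non-empty) set $V_a \cap V_b$. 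With the three topological facts in hand, Moln\'ar's theorem immediately yields a common spindle-visibility point. The same scheme proves Theorem \ref{thm:artgal}, with the classical visibility region $\star{a,S}$ in place of $\stsp{a,S}$; the diameter hypothesis is needed only in the spindle setting to ensure that all spindles are bounded and well-defined.
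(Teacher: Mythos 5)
Your overall plan — reduce to a topological Helly theorem (Moln\'ar) applied to the spindle-visibility regions $V_a=\stsp{a,S}$ — is the same as the paper's, and several of your intermediate steps are correct: the regions $V_a$ are compact, and your argument that $V_a$ is linearly star-shaped from $a$ (using $q\in\spi{a,p}\Rightarrow\spi{a,q}\subseteq\spi{a,p}$) is a clean way to get simple connectedness of each $V_a$. You also correctly extract non-emptiness of pairwise and triple intersections from the hypothesis.

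The gap is in the step establishing connectedness of $V_a\cap V_b$. You reduce this, via Mayer--Vietoris, to the claim that $V_a\cup V_b$ is simply connected, and support that claim only with a sketched ``deformation argument contracting any loop \dots by pushing it via the pivots $a$ and $b$.'' This is the crux, and it is not proved. Worse, the reduction is essentially circular: for two connected, simply connected compacta in the plane with non-empty intersection, simple connectedness of the union and connectedness of the intersection are \emph{equivalent} (one direction is the Mayer--Vietoris computation you wrote; the other is van Kampen). A naive ``push toward the pivots'' homotopy of a loop that alternates between $V_a$ and $V_b$ runs into exactly the question of whether the transition points can be joined inside $V_a\cap V_b$, i.e., whether the intersection is connected. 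So the argument does not break the circle, and in its current form the proof is incomplete.

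The paper breaks the circle differently and more directly: it proves that $\stsp{x,S}$ is \emph{geodesically convex} with respect to $S$ (for spindles this uses the observation that for a geodesic $\gamma'$ lying in a triangle $\Delta_{p'xq'}$, one has $\spi{x,y}\subseteq\Delta\cup\spi{x,p'}\cup\spi{x,q'}$ for $y\in\Delta$). Geodesic convexity is preserved under intersection, and a geodesically convex subset of $S$ is automatically simply connected; so connectedness (indeed simple connectedness) of every $V_{a_1}\cap\dots\cap V_{a_k}$ comes for free, and the topological Helly theorem applies without ever needing to analyse unions. If you want to rescue your approach, the thing to prove is geodesic convexity of $V_a$ (which in particular is stronger than the star-shapedness you established), rather than simple connectedness of the unions.
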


Neither the planarity nor the simple connectedness condition can be dropped, 
see Remark~\ref{rem:artgal}.

In the rest of the paper we prove the stated theorems.

\section{Krasnosselsky--type result: Proof of Theorem \ref{thm:krasnowellspindle}}

In what follows, an \emph{arc}\label{def:arc} (or \emph{circular arc}) is a
connected subset of a circle, which contains no pair of antipodal points of the
circle.

Clearly, $ \kers S \subseteq \bigcap_{x\in \bd S} \sconv{\stsp{x,S}}$. So, we 
are 
left to show that
$$
\bigcap_{x\in \bd S} \sconv{\stsp{x,S}}  \subseteq   \kers S.
$$ 
We may assume that 
\[\bigcap_{x\in \bd S} \sconv{\stsp{x, S}}\neq\emptyset,\]
as otherwise there is nothing to prove. We pick a point $y_0$ from this
intersection. Note that
\begin{equation}\label{eq:SinBy}
 S\subseteq \B[y_0, 2].
\end{equation}
Indeed, let $z$ be a point of $S$ that is furthest from $y_0$. Suppose for a
contradiction that $r:=|y_0-z|>2$. Then $S\subseteq \B[y_0,r]$, and hence
$\stsp{z}$ is in the closed unit ball that touches $\S[y_0,r]$ at $z$ from
inside. It follows that $\sconv{\stsp{z}}$ is in this unit ball, too, and thus
does not contain $y_0$, a contradiction.

In three steps we will show that $y_0$ is a spindle star center of $S$.

\emph{Step 1.}
We show that for any $x\in S$ we have $\spi[\sqrt{2}]{x,y_0}\subseteq S$.
In this step, we follow closely Krasnosselsky's proof from \cite{Kr46}.

Suppose, for a contradiction, that there is an $x_0\in S$ such that
$\spi[\sqrt{2}]{x_0,y_0}\not\subseteq S$.

Then, by the compactness of $S$, there is an arc $\gamma$ of radius greater than
$\sqrt{2}$ connecting $x_0$ with $y_0$ such that some point $x^\prime\in\gamma$
is not in $S$. For any two points $a,b\in\gamma$, we denote the closed part of
$\gamma$ between $a$ and $b$ by $\gamma[a,b]$ and the open arc by $\gamma(a,b)$.

Let $x_1$ be the point of $S\cap \gamma[x^\prime, x_0]$ that is closest to
$x^\prime$ (note that $x_1\in\bd S$ may or may not coincide with $x_0$). Let 
$x_2$
denote a point in $\gamma(x_1, x^\prime)$ which is very close to $x_1$, more
precisely such that $d(x_1, x_2)<d(x^\prime, S$).
Finally, let $x_3$ and $y_1\in \bd S$ denote a pair of points of the sets 
$\gamma[x_2,
x^\prime]$ and $S$, respectively, at which the distance of the two (compact)
sets is attained. Note that $x_3$ may coincide with $x_2$, but is certainly not
the same as $x^\prime$. See Figure~\ref{fig:krasno}.

Now, we denote by $B$ the unit ball passing through $y_1$ with outer normal
vector $x_3-y_1$ at $y_1$.

On the one hand, we claim that $y_0$ is not in $B$. Consider the angle of the
vector $x_3-y_1$ with the direction vector of the non-degenerate arc
$\gamma[x_3, y_0]$ (oriented from $x_3$ toward $y_0$). From the choice of $x_3$
and $y_1$ it follows that this angle is at most a right angle. Since
$d(x_0,y_0)\leq 2$ (by
\eqref{eq:SinBy}), and the radius of $\gamma$ is greater than
$\sqrt{2}$, thus $\gamma$ is shorter than a quarter circle. Applying
Lemma~\ref{lem:ballcircle}, we obtain that $y_0\notin B$.

\begin{lemma}\label{lem:ballcircle}
  Let $z\in\ED$ and $x\in\ED\setminus\B[z]$ be given points. Let $C$ be a circle
of radius at least one with an arc $\mu$ which is shorter than a quarter circle
and has end points $x$ and $y$, oriented from $x$ toward $y$. Assume that the
angle of the vector $x-z$ with $\mu$ is at most a right angle. Then $y\notin
\B[z]$.
\end{lemma}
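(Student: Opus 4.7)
The plan is to reduce the problem to the plane $P$ containing the circle $C$ and then analyze the squared-distance function on $C$ in that plane. I would set $z'$ to be the orthogonal projection of $z$ onto $P$ and $\delta:=|z-z'|$. By Pythagoras, $|p-z|^2=|p-z'|^2+\delta^2$ for every $p\in P$; if $\delta>1$ then $|y-z|\ge\delta>1$ is automatic, so I may assume $\delta\le 1$ and write $\rho:=\sqrt{1-\delta^2}\in[0,1]$. The conclusion $y\notin\B[z]$ becomes $|y-z'|>\rho$, and the hypothesis $x\notin\B[z]$ becomes $|x-z'|>\rho$. Since the unit tangent $t$ to $\mu$ at $x$ lies in $P$ while $z-z'\perp P$, we have $(x-z)\cdot t=(x-z')\cdot t$, so the angle hypothesis descends to the plane: the planar vector $x-z'$ makes an angle at most $\pi/2$ with $t$.

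\textbf{Key steps.} In the plane, I would place the center of $C$ at the origin and put $z'=(d,0)$ with $d\ge 0$, parametrize $p(\theta)=r(\cos\theta,\sin\theta)$, and compute $f(\theta):=|p(\theta)-z'|^2=r^2+d^2-2rd\cos\theta$. If $d=0$ then $f\equiv r^2$ on $C$ and the claim is immediate, so assume $d>0$. The condition $|p(\theta)-z'|>\rho$ becomes $\cos\theta<K$, where $K:=(r^2+d^2-\rho^2)/(2rd)$, and a direct computation shows that the unit tangent in the direction of increasing $\theta$ has inner product $d\sin\theta$ with $p(\theta)-z'$. After reflecting if needed so that the arc runs in the positive $\theta$-direction, the angle hypothesis reads $\sin\theta_x\ge 0$, i.e., $\theta_x\in[0,\pi]$; combined with $\cos\theta_x<K$ this forces $\theta_x\in(\theta^*,\pi]$ with $\theta^*:=\arccos K$. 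Since $\mu$ is shorter than a quarter circle, $\theta_y=\theta_x+\alpha$ for some $\alpha\in(0,\pi/2)$, so $\theta_y<3\pi/2$. If $\theta^*<\pi/2$ strictly, then $2\pi-\theta^*>3\pi/2>\theta_y>\theta^*$, placing $\theta_y$ in the ``outside'' arc $(\theta^*,2\pi-\theta^*)$ and yielding $\cos\theta_y<K$, equivalently $|y-z|>1$.

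\textbf{Main obstacle.} The hard part is verifying $\theta^*<\pi/2$ strictly, equivalently $K>0$. This is exactly where the radius condition $r\ge 1$ enters essentially: combined with $\rho\le 1$ and $d>0$ it yields $r^2+d^2>\rho^2$, hence $K>0$. Without $r\ge 1$ one could have $K\le 0$, and then an arc just shy of a quarter circle could exit $\B[z]$ on one side and re-enter on the other; the strict inequality ``$\mu$ shorter than a quarter circle'' plays the analogous role in bounding $\theta_y<3\pi/2$.
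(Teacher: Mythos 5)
Your computational argument is correct and is in essence a coordinate rendering of the paper's synthetic proof: the paper observes that $\B[z]\cap C$ is an arc shorter than a semicircle (precisely your $K>0$, i.e.\ $\theta^{\ast}<\pi/2$, coming from $r\ge1\ge\rho$ and $d>0$), so that $C\setminus\B[z]$ exceeds a semicircle, and that the angle hypothesis puts $x$ before the midpoint of that outer arc (your $\theta_x\in(\theta^{\ast},\pi]$), whence a sub-quarter arc starting at $x$ stays in $C\setminus\B[z]$. You make these two assertions explicit via the identity $(p(\theta)-z')\cdot t(\theta)=d\sin\theta$ and the cosine threshold $K$, which is extra verification but the same geometric decomposition. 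One small item to clean up before invoking $\arccos$: if $K\ge1$ (which can occur, e.g.\ when $z'$ is far from $C$) then $\theta^{\ast}=\arccos K$ is undefined; dispatch that case first — it is trivial, since $\cos\theta<K$ then holds for every $\theta$ except possibly $\theta=0$ when $K=1$, and $\theta_y\in(0,3\pi/2)$ avoids that value.
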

\begin{proof}
  Since $\B[z]\cap C$ is an arc of $C$ (which is shorter than a
  semicircle), $C \setminus \B[z]$ is longer than a semicircle. From the
  assumption on the angle of $\mu$ and $x-z$, it follows that if we consider the
  same orientation of $C\setminus \B[z]$ as that of $\mu$ then $x$ precedes the
  midpoint of $C\setminus \B[z]$. Since $\mu$ is shorter than a quarter circle,
  $\mu\subset C\setminus \B[z]$.
\end{proof}

On the other hand, we claim that $y_0\in B$. Indeed, if a point $z\in \ED$ is
not
in $B$ then $\spi{y_1,z}$ has points closer to $x_3$ than $y_1$, and thus $z$
cannot be in $\stsp{y_1, S}$. That is, $\stsp{y_1, S}\subset B$, and hence,
$\sconv{\stsp{y_1, S}}\subset B$ from which it follows that $y_0\in B$.

%We note --without using it later-- that with the same argument one can show
%that
%for any $x_0\in S\cap \B[y_0,\sqrt{2}]$ we have $\spi{x_0,y_0}\subseteq S$.
%Observe that in Step 1 the only place where we used that we were working with 
%$\spi[\sqrt{2}]{x_0,y_0}$ as opposed to $\spi{x_0,y_0}$ was the last paragraph,
%where the fact that $\gamma$ was an arc shorter than a quarter circle was
%needed. This holds in the situation of Step 2 as well.

\begin{figure}[tb]
\begin{minipage}[b]{0.48\columnwidth}%
    \centering
    \includegraphics[width=0.93\textwidth]{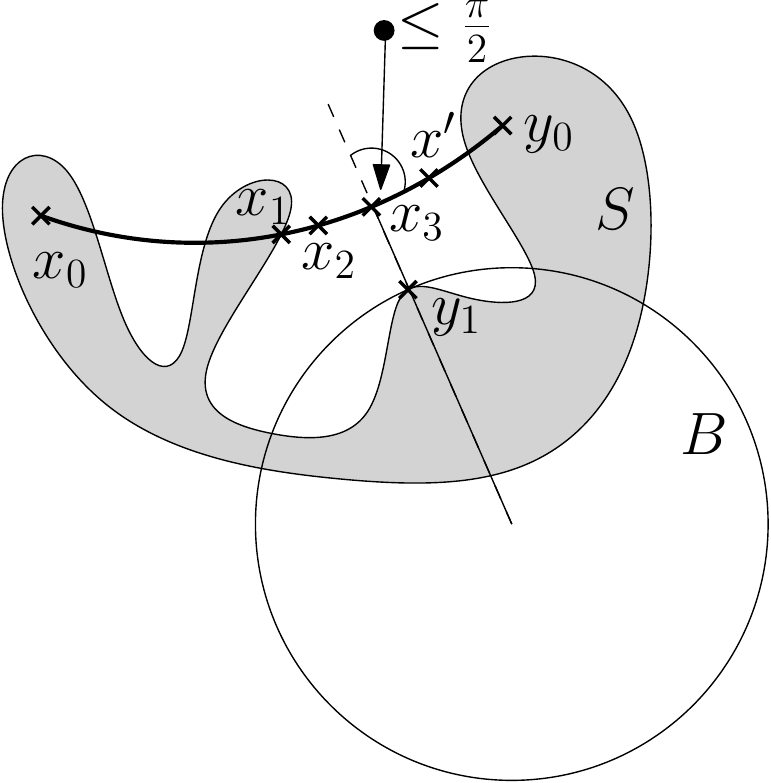}
    \caption[]{}
    \label{fig:krasno}
\end{minipage}%
\hfill%
\begin{minipage}[b]{0.52\columnwidth}%
\centering
    \includegraphics[width=0.93\textwidth]{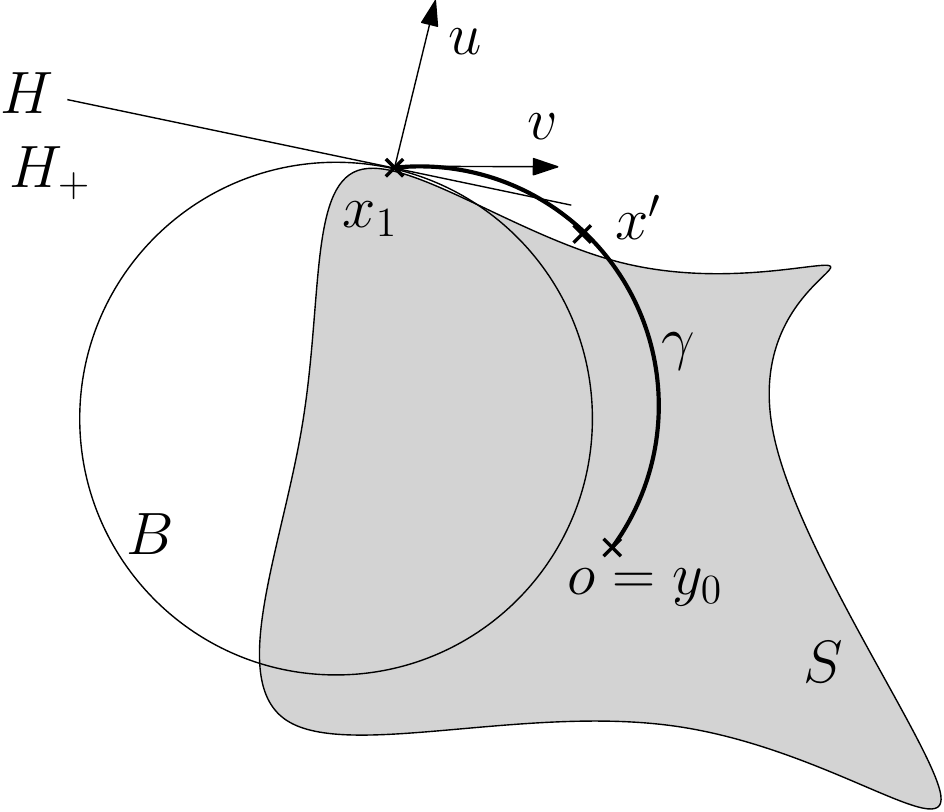}
    \caption[]{}
    \label{fig:krasnoo}
\end{minipage}
\end{figure}

\emph{Step 2.}
A case with a differentiability assumption.

Suppose, for a contradiction, that there is an $x_0\in
S\setminus\{y_0\}$ such that for some arc $\gamma$ of radius
$1< r < \sqrt{2}$ there is a point $x^\prime\in\gamma$ which is not in $S$.
Let $x_1$ be the point of $S\cap \gamma[x^\prime, x_0]$ that is closest to
$x^\prime$ (note that $x_1\in \bd S$ may or may not coincide with $x_0$). We 
may 
assume without loss of generality that $y_0$ is the origin.

Assume that $\bd S$ is differentiable at $x_1$. More precisely, let $f: \ED\to
\Re\cup\{\infty\}$ be the \emph{distance function} (or \emph{gauge function}) of
$S$, that is
$f(x)=\inf\{\lambda>0 \st x\in\lambda S\}$. 
By Step 1, there is a neighborhood of $x_1$, in which the values of $f$
are all real and not infinity.
We assume that $f$ is differentiable at $x_1$. Let
$u=\left(\frac{\partial f}{\partial x^1},\ldots, \frac{\partial f}{\partial
x^d}\right)(x_1)$ denote the gradient of $f$ at the point $x_1$. Since 
$f(x_1)=1>0$ and $f$ is \emph{positively homogeneous} (i.e. $f(\mu x)=\mu f(x)$
for
any $x\in \ED$ and $\mu>0$), we have that $u$ is a non-zero vector.

Let $H$ denote the hyperplane through $x_1$ with normal vector $u$,
and $B$ denote the unit ball touching $H$ at $x_1$ with outer normal
vector $u$ at $x_1$. Let $H_+$ denote the closed halfspace bounded by $H$ and
containing $B$. See Figure~\ref{fig:krasnoo}.

On the one hand, we claim that $y_0$ is not in $B$. The open arc
$\gamma(x_1,x^\prime)$ is disjoint from $S$. From the differentiability of $f$
at $x_1$, it follows that the angle of $u$ and the direction vector (denote it 
by
$v$) of $\gamma[x_1, y_0]$ at $x_1$ (we consider $\gamma$ oriented from $x_1$
toward $y_0$) is at most a right angle. More precisely, consider a
parametrization of $\gamma$ as follows: $\hat\gamma:[0,1] \to \ED$, where
$\hat\gamma(0)=x_1$, and such that $\left(\frac{d}{dt} \hat\gamma(t) |_{t=0+}
\right)\neq 0$ (here ``$0+$'' means right--sided derivative at $t=0$). Then $0
\leq \frac{d}{dt} f(\hat\gamma(t)) |_{t=0+}$ since otherwise there would be an
$\varepsilon>0$ such that
$f(\hat\gamma(\varepsilon))<f(\hat\gamma(0))=f(x_1)=1$. By Step 1, $S$ is
starshaped and thus, it follows that $\hat\gamma(\varepsilon)\in S$, a
contradiction. Next,

\begin{equation}\label{eq:diff}
  \frac{d}{dt} f(\hat\gamma(t)) |_{t=0+} = \left(\frac{d}{dt} \hat\gamma(t)
  |_{t=0+} \right)\cdot \left(\frac{\partial f}{\partial x^1},\ldots,
  \frac{\partial f}{\partial
  x^d}\right)(x_1)=v\cdot u.
\end{equation}

Since $\gamma$ is of radius
greater than one and $\gamma[x_1, y_0]$ is not longer than a semicircle, it
follows that $y_0$ is not in $B$.

On the other hand, we claim that $y_0\in B$. Let $z\in \ED$ be a point that is
not in $B$. Then $\spi{x_1,z}$ contains an arc (call it $\omega$) starting at
$x_1$ that leaves the halfspace $H_+$. In other words, the
angle of the direction vector $w$ of $\omega$ (oriented from $x_1$ toward $z$)
at $x_1$ and the vector $u$ is acute, that is $w\cdot u >0$. The same
computation as in \eqref{eq:diff}, shows that $\frac{d}{dt} f(\hat\omega(t))
|_{t=0+}>0$, where $\hat\omega$ is a parametrization of $\omega$ such that
$\hat\omega(0)=x_1$. Hence, there is an $\varepsilon>0$ such that
$f(\hat\omega(t))>f(\hat\omega(0))=f(x_1)=1$ for all $t\in(0,\varepsilon)$. In
other words, a non-degenerate open starting section of $\omega$ is disjoint from
$S$. Hence, $\spi{x_1,z}\not\subset S$ that is, $z\notin \stsp{x_1, S}$. Thus we
proved that $\stsp{x_1, S}\subset B$, and hence, $\sconv{\stsp{x_1, S}}\subset
B$.
From the definition of $y_0$, we obtain that $y_0\in B$.

\emph{Step 3.} The general case.

Let $\gamma, f$ and $x_1\in \bd S$ denote the same as in Step 2. By the 
compactness of
$\bd S$ it follows that there is a neighborhood $U$ of $x_1$ such that for any 
point
$x_2$ in $U\cap \bd S$, there is an arc $\xi$ (of radius greater than one)
connecting $x_2$ with $y_0$ such that a non-degenerated open section of $\xi$
starting at $x_2$ is disjoint from $S$. By Step 2, it is sufficient to find a
point $x_2$ in $U$ for which $f$ is differentiable at $x_2$.

From Step 1 it follows that there is a neighborhood $U_1\subset U$ of $x_1$ in
which the values of $f$ are all real (and not infinity) and on which $f$ is
a Lipschitz function. This is not hard to see, for a similar result on
(linearly) starshaped sets with a full--dimensional kernel (which is not
necessarily the case here), see \cite{To67}.

By Rademacher's theorem (cf. Section VII/23 of \cite{DiBe02}), $f$ is
differentiable almost everywhere in $U_1$. From
the positive homogeneity of $f$ it follows that if $f$ is differentiable at a
point $z\in U_1$ then $f$ is differentiable at any point on the ray emanating
from the origin and passing through $z$. Since points of the form
$z=\mu x_2$ (where $x_2\in U_1\cap \bd S$ and $\mu>0$) are of positive
Lebesgue measure in $U_1$, it follows that there is a point $x_2$ in $\bd S\cap
U_1$ where $f$ is differentiable. This concludes the proof of
Theorem~\ref{thm:krasnowellspindle}.

\section{Flowers: Proof of Theorem~\ref{thm:flowers}} 

First, we show that $ \kers \left(F( B_1, \dots , B_n)\right) \subseteq 
\bigcap_{i=1}^{n}B_i$. Indeed, let us assume in an indirect way that there 
exists a point $x\in \kers \left(F( B_1, \dots , B_n)\right)\setminus 
B_{i_{0}}$ for some $1\le i_0\le n$. As $F( B_1, \dots , B_n)$ is reduced along 
its boundary in $\ED$, therefore there exists a point $y\in \bd B_{i_0}$ and an 
open ball $\B (y, \epsilon )$ centered at $y$ with radius $\epsilon >0$ such 
that

\begin{equation}\label{flower-1}
\B (y, \epsilon )\cap     \bd B_{i_0}= \B (y, \epsilon )\cap   \bd F( B_1, 
\dots , B_n).
\end{equation}

As $y\in \bd B_{i_0}$ and $x\notin B_{i_0}$, therefore $\spi{y,x}$ contains an 
arc starting at $y$ that leaves $B_{i_0}$. Clearly, this together with 
(\ref{flower-1}) contradicts to  
$x\in \kers \left(F( B_1, \dots , B_n)\right)$.

Second, we show that $ \kers \left(F( B_1, \dots , B_n)\right) \supseteq 
\bigcap_{i=1}^{n}B_i$. It is convenient to prove it by induction on $n$. 
The claim is obvious for $n=1$. So, assume that it holds for any positive 
integer at most n-1 and write $F( B_1, \dots , B_n)=G(B_1,\dots ,B_m)\cap 
H(B_{m+1}, \dots ,
B_n)$ (resp., $F( B_1, \dots , B_n)=G(B_1,\dots ,B_m)\cup H(B_{m+1}, \dots , 
B_n)$) with $1\le m\le n-1$, where $G$ and $H$ are lattice polynomials of at 
most $n-1$ variables.
We note that the flower-polyhedra $G( B_1, \dots , B_m)$ and $H( B_{m+1}, \dots 
, B_n)$ are reduced along their boundaries in $\ED$. Thus, by the inductive 
assumption
\begin{equation}\label{flower-2}
\kers \left(G( B_1, \dots , B_m)\right) \supseteq \bigcap_{i=1}^{m}B_i, \ {\rm  
and}\  \kers \left(H( B_{m+1}, \dots , B_n)\right) \supseteq 
\bigcap_{i=m+1}^{n}B_i\ .
\end{equation}
Hence, (\ref{flower-2}) implies in a straightforward way that
$$
\kers \left(F( B_1, \dots , B_n)\right) \supseteq \kers \left(G( B_1, \dots , 
B_m)\right)\cap \kers \left(H( B_{m+1}, \dots , B_n)\right)\supseteq 
\bigcap_{i=1}^{n}B_i\ ,
$$
finishing the proof of Theorem~\ref{thm:flowers}.

\section{Typical sets: Proof of Theorem~\ref{thm:zamfi}}

Denote by $\SD_n\subset\SD$ the family of those compact spindle starshaped sets,
whose kernel contains a ball of radius $\frac{1}{n}$. It is sufficient to prove
that $\SD_n$ is nowhere dense in $\SD$.

Let $S\in\SD_n$ be a spindle starshaped set and $\varepsilon>0$. We will show
that in the $\varepsilon$--neighborhood of $S$, there is an $S^\prime\in\SD$
such that in some neighborhood of $S^\prime$ there is no element of
$\SD_n$.

Let $x\in\inter\kers(S)$ be a point. Take a line $\ell$ through $x$,
and denote the two endpoints of the non-degenerate line segment $\ell\cap S$ by
$u$ and $v$. Let $u^\prime$ and $v^\prime$ be points on $\ell$ close to $u$ and
$v$, respectively, but not on the line segment $[u,v]$. Now we attach two small
``spikes'' onto $S$: let $S^\prime=S\cup\spi{x,u^\prime}\cup\spi{x,v^\prime}$.
We have $\kers S^\prime=\{x\}$. Furthermore, clearly, there is a $\mu>0$ such
that in the $\mu$--neighborhood of $S^\prime$ in $\SD$, no set has a spindle
kernel, which contains a ball of radius $\frac{1}{n}$. This finishes the proof.

\section{Local characterization: Proof of Theorem~\ref{thm:spindleToCu}}

The proof is somewhat simpler than the one in \cite{FoTo97}. First, 
$\inter\left(\stsp{y}\right)$ is not empty for any $y\in\inter S$, and
$S=\cl(\inter S)$ hence, $M=\inter\left(\stsp{x}\right)$ is not empty. Suppose
for a contradiction that $x$ is not in the spindle kernel of $S$ and so, $\inter
S\neq M$. Since $\inter S$ is connected and $M$ is open and not empty, it 
follows that
there is a point, say $t$, in $\inter S\cap \bd M$. Let $\varepsilon>0$ be such
that $B=\B[t,\varepsilon]\subseteq\inter S$. We have that $S$ contains
$B\cup\spi{x,t}$. Clearly, for any $\delta>0$ we can find a point $x^\prime\in
\B[x,\delta]$ on $(x,t)$ and a $t^\prime$ close to $t$ on the ray emanating from
$t$ in the direction $\overrightarrow{xt}$ such that $\spi{x^\prime,
t^\prime}\subset B\cup\spi{x,t}$. Since $t\in\inter\spi{x^\prime,t^\prime}$, we
have that $\stsp{x}$ does not contain $\stsp{x^\prime}$ contradicting the
assumption that $x$ is a spindle peak.

\section{Carath\'eodory's theorem for spindle convex hull revisited}

Recall that Carath\'eodory's theorem states that the convex hull of a set $X 
\subset\ED$ is the union of simplices with vertices in $X$.
We prove the following spindle convex analogue.

\begin{lemma}\label{caraforspindle}
Let  $X$ be a set in $\ED$. If $y\in\bd (\sconv X )$, then
there exists a subset $\{x_1, \dots , x_d\}$ of $X$ such that 
$y\in\sconv{\{x_1, \dots , x_d\}} $. Moreover, if $y\in\inter (\sconv X )$, then
there exists a set $\{x_1, \dots , x_{d+1}\}\subseteq X$ such that 
$y\in\inter(\sconv{\{x_1, \dots , x_{d+1}\}}) $. 
\end{lemma}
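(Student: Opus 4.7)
The plan is to use the representation
$\sconv X = \bigcap \{ \B[u, 1] : X \subseteq \B[u, 1]\} = \bigcap_{u \in U_X} \B[u, 1],$
where $U_X := \bigcap_{x \in X} \B[x, 1]$ is the closed convex set of centers of unit balls containing $X$. In these terms $y \in \sconv X$ is equivalent to $U_X \subseteq \B[y, 1]$, with the boundary versus interior distinction corresponding to $U_X$ touching or avoiding $\bd \B[y, 1]$. I may replace $X$ by $\cl X$ (which does not change $\sconv X$ since unit balls are closed), after which $X$ is compact because $X \subseteq \sconv X$ is bounded.

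For the boundary statement, $y \in \bd \sconv X$ yields a point $u_0 \in U_X$ with $|u_0 - y| = 1$, giving a supporting unit ball $\B[u_0, 1]$ at $y$ containing $X$. Since $u_0$ is a farthest point of $U_X$ from $y$, the vector $u_0 - y$ belongs to the outer normal cone of the convex set $U_X$ at $u_0$, and this normal cone equals the closed conical hull of $\{u_0 - x : x \in X_0\}$, where $X_0 := X \cap \bd \B[u_0, 1]$ is compact. Carath\'eodory's theorem for convex cones in $\ED$ then produces $x_1, \ldots, x_d \in X_0$ and $\lambda_i \geq 0$ with
$u_0 - y = \sum_{i=1}^d \lambda_i(u_0 - x_i).$
Taking the inner product of both sides with $u_0 - y$ and using $|u_0 - y| = |u_0 - x_i| = 1$ yields $\sum \lambda_i \geq 1$. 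Then, for any $u \in \bigcap_i \B[x_i, 1]$, expanding $|u - x_i|^2 \leq 1 = |u_0 - x_i|^2$ gives $(u - u_0) \cdot (x_i - u_0) \geq |u - u_0|^2 / 2$; a $\lambda_i$-weighted sum of these inequalities, combined with $\sum \lambda_i \geq 1$, leads directly to $|u - y|^2 \leq 1$, so $y \in \sconv{\{x_1, \ldots, x_d\}}$.

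For the interior statement, $U_X \subseteq \inter \B[y, 1]$ holds, so the open sets $\{\ED \setminus \B[x, 1] : x \in X\}$ cover the compact sphere $\bd \B[y, 1]$. A finite subcover yields $Y_0 \subseteq X$ with $\bigcap_{x \in Y_0} \B[x, 1]$ disjoint from $\bd \B[y, 1]$; being a compact convex set that meets $\inter \B[y, 1]$ through $U_X$, by connectedness it lies entirely inside, so $y \in \inter \sconv{Y_0}$. The main obstacle is reducing $|Y_0|$ to $d+1$. The key observation is that for $u \in \bd \B[y, 1]$ the ball condition $u \in \B[x, 1]$ is equivalent (via a short calculation using $|u - y| = 1$) to the half-space condition $u \in H_x := \{v : v \cdot (x - y) \geq |x - y|^2/2\}$, so the sphere-covering condition becomes $\bd \B[y, 1] \cap \bigcap_{x \in Y_0} H_x = \emptyset$. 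A Helly-type argument applied to the half-spaces $\{H_x\}_{x \in Y_0}$ together with the convex body $\B[y, 1]$ then extracts a subfamily of size $d+1$ whose corresponding ball intersection still lies strictly inside $\B[y, 1]$, yielding the desired $\{x_1, \ldots, x_{d+1}\}$.
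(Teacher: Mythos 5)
Your approach is genuinely different from the paper's: instead of passing to the sphere of a supporting unit ball and using spherical convexity (Lemmas~3.1 and 5.6 of \cite{BLNP07} together with spherical Carath\'eodory), you work in the dual picture with the center set $U_X=\bigcap_{x\in X}\B[x]$ and the normal cone of $U_X$ at a farthest point $u_0$. The algebraic verification that a representation $u_0-y=\sum_i\lambda_i(u_0-x_i)$ with $\lambda_i\ge 0$, $x_i\in X_0$ forces $y\in\sconv{\{x_1,\dots,x_d\}}$ is correct.

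The gap is the assertion that the normal cone of $U_X$ at $u_0$ equals the closed conical hull of $\{u_0-x:x\in X_0\}$. The inclusion $\supseteq$ holds, but equality requires a constraint qualification (essentially $\crad X<1$, so that $U_X$ has nonempty interior) and fails exactly in the degenerate case $\crad X=1$. Take $d=2$ and let $X$ be the closed upper unit semicircle $\{(\cos\theta,\sin\theta):0\le\theta\le\pi\}$. Then $U_X=\{o\}$, $\sconv X=\B[o]$, and for $y=(0,-1)\in\bd(\sconv X)$ the only possible choice is $u_0=o$. Here $X_0=X$, so the closed conical hull of $\{-x:x\in X_0\}$ is the closed lower half-plane, whereas the normal cone of the singleton $\{o\}$ is the whole plane; in particular $u_0-y=(0,1)$ is \emph{not} in that conical hull, so Carath\'eodory for cones cannot be invoked and your argument produces no pair $\{x_1,x_2\}$ for this $y$, even though the conclusion does hold (indeed $y\in\spi{(1,0),(-1,0)}=\B[o]$). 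This is exactly the configuration the paper's second case (``$X\cap\S(p)$ is not contained in any open hemisphere'') is designed to handle, and your proof has no analogue of it.

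A secondary issue: in the interior case the reduction from the finite subcover $Y_0$ to $d+1$ points is only asserted. The condition $\bigcap_{x\in Y_0}H_x\cap\S(y)=\emptyset$ concerns an intersection with a sphere, which is not a convex subset of $\ED$, so ordinary Helly does not apply directly, and it is not clear which Helly-type theorem you intend. The paper avoids this by a compactness approximation from the already-proved closed case (Theorem~5.7 of \cite{BLNP07}).
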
 

In \cite{BLNP07} (see Theorem 5.7) this statement is proved in the special case 
when $X$ is closed. Since we need it in the general case, we outline the proof 
here.

\begin{proof}[Sketch of the proof of Lemma~\ref{caraforspindle}]
If $y\in\inter (\sconv X )$, then the claim easily follows from the case when 
$X$ is closed, by a standard approximation argument. Now, assume that $y\in\bd 
(\sconv X )$. Clearly, $\crad X\leq 1$, as otherwise $\sconv X=\ED$. By 
Lemma~3.1 of \cite{BLNP07}, there is a point $p$ such that $\B[p]\supseteq 
\sconv{X}$ and $y\in\S(p)$. There are two cases. 

First, assume that $X\cap\S(p)$ is contained in an 
open hemisphere $C$ of $\S(p)$. Then clearly, $\sconv{X}\cap\S(p)$ is also 
contained in $C$ (otherwise $C\cup\B(p)$ would be a spindle convex set 
containing $X$ and not containing $y$, where $\B(p)$ denotes the open unit ball 
centered at $p$), and thus, so is $y$. Then $y$ must be 
in $\mathrm{Sconv}{X\cap\S(p)}$ and Carath\'eodory's theorem for the sphere 
yields the desired result, where $\mathrm{Sconv}$ denotes the spherical convex 
hull within
$\S(p)$.
Next, assume that $X\cap\S(p)$ is not contained in any open hemisphere of 
$\S(p)$. Then there are $d+1$ points in $X\cap\S(p)$ such that the convex hull 
of those $d+1$ points contains $p$. Clearly, for some $d$ of those points, we 
either have that their convex hull still contains $p$ or their spherical convex 
hull within $\S(p)$ contains $y$. In both cases, the statement follows.
\end{proof}

The following statement shows that the spindle convex hull may be built ``from
bottom up'' in the same way as the convex hull. On the other hand,
one can regard that statement as an extension of Lemma~\ref{caraforspindle}.

\begin{lemma}\label{lem:spindlecvxhull-general}
Let $X_1,\dots , X_n$ be spindle convex sets in $\ED , d\ge 2$ and let 
$m=\min{\{n, d+1\}}$. Then 
\[\sconv{X_1\cup\dots\cup X_n}=\bigcup_{1\le i_1<\dots <i_m\le 
n}\left(\bigcup_{x_{i_1}\in X_{i_1},\dots , x_{i_m}\in X_{i_m}} \sconv{ \{ 
x_{i_1},\dots , x_{i_m}  \}   }\right).\]
\end{lemma}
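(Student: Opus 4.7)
The inclusion $\supseteq$ is immediate: for any $1 \le i_1 < \dots < i_m \le n$ and any $x_{i_j} \in X_{i_j}$, the set $\{x_{i_1}, \dots, x_{i_m}\}$ is contained in $X_1 \cup \dots \cup X_n$, so its spindle convex hull lies inside $\sconv{X_1 \cup \dots \cup X_n}$. For the reverse inclusion, the plan is to fix $y \in \sconv{X_1 \cup \dots \cup X_n}$ and invoke Lemma~\ref{caraforspindle} to obtain points $p_1, \dots, p_k$ of $X_1 \cup \dots \cup X_n$ with $k \le d+1$ and $y \in \sconv{\{p_1, \dots, p_k\}}$. Each $p_j$ lies in some $X_{\iota(j)}$, so the list visits at most $d+1$ of the sets $X_1, \dots, X_n$.

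The plan is then to rewrite the list so that each set $X_i$ is represented at most once and so that exactly $m$ distinct sets are represented. The second part is a painless \emph{padding} step: any $X_i$ not yet represented may contribute an arbitrary point, which only enlarges the spindle convex hull and so leaves $y$ inside; this uses $n \ge m$. The first part requires \emph{merging}: whenever two of the points, say $p_1$ and $p_2$, lie in the same $X_i$, one must produce a single $q \in X_i$ with $y \in \sconv{\{q, p_3, \dots, p_k\}}$. Granted the merging, iterating it and then padding yields exactly $m$ points drawn from $m$ distinct sets whose spindle convex hull contains $y$, as required.

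The merging is the only genuinely technical step, and the plan is to handle it by the spherical argument used in the sketch of Lemma~\ref{caraforspindle}. The interior case $y \in \inter \sconv{\{p_1, \dots, p_k\}}$ reduces by a small perturbation to the boundary case. In the boundary case, one may assume $\crad\{p_1, \dots, p_k\} \le 1$ (otherwise the spindle convex hull is $\ED$ and the claim is vacuous) and invoke Lemma~3.1 of \cite{BLNP07} to obtain a unit ball $\B[c] \supseteq \{p_1, \dots, p_k\}$ with $y \in \S(c)$. The key sub-observation is that $X_i \cap \S(c)$ is spherically convex in $\S(c)$: for any $a, b \in X_i \cap \S(c)$ the spindle $\spi{a,b}$ lies in $X_i$, and it meets $\S(c)$ precisely in the shorter great-circle arc joining $a$ and $b$, which therefore stays in $X_i \cap \S(c)$. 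Bifurcating now as in the sketch of Lemma~\ref{caraforspindle}, either the relevant points lie in an open hemisphere of $\S(c)$, in which case central projection to $\Re^{d-1}$ converts the problem into the classical linear identity $\conv(A \cup \{z_1, \dots, z_s\}) = \bigcup_{a \in A} \conv\{a, z_1, \dots, z_s\}$ for convex $A$, applied to the image of $X_i \cap \S(c)$ and of the remaining points; or $c \in \conv\{p_1, \dots, p_k\}$, which forces $\sconv{\{p_1, \dots, p_k\}} = \B[c]$, and then any $q \in X_i$ works since $y \in \B[c]$. Either way one obtains the desired single replacement $q \in X_i$. Once this spherical merging is established, the rest of the proof is routine bookkeeping.
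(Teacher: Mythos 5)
Your structural plan --- extract at most $d+1$ representatives via Lemma~\ref{caraforspindle}, then merge duplicates from the same $X_i$ and pad with arbitrary points from unused $X_i$'s until exactly $m$ distinct sets appear --- is a genuine alternative to the paper's route, which instead handles the case $m=d+1$ by Carath\'eodory alone and the case $m=n\le d$ by induction on $n$ via Sublemma~\ref{sublem:spindlecvxhull}. Your bookkeeping (the observation that the number of distinct sets visited is automatically $\le m$, so padding suffices once merging is done) is sound.

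The gap is in the merging step itself. The reduction from $y\in\inter\sconv{\{p_1,\dots,p_k\}}$ to the boundary case ``by a small perturbation'' is not spelled out, and it is not clear how it goes: solving the problem for a boundary point $y'$ near $y$ produces a $q$ adapted to $y'$, not to $y$, while perturbing the $p_j$'s changes the hull in a way you would need to control. Your boundary argument also silently assumes all of $p_1,\dots,p_k$ lie on $\S(c)$ and that $\crad\{p_1,\dots,p_k\}<1$ (so that Lemma~\ref{lem:sconv} applies); the off-sphere points and the borderline $\crad=1$ case are exactly the delicate cases the paper treats separately in the proof of Sublemma~\ref{sublem:spindlecvxhull-special}. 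There is a much cheaper route to the merging claim that avoids all of this: apply Sublemma~\ref{sublem:spindlecvxhull} with $Y=X_i$ and $Z=\sconv{\{p_3,\dots,p_k\}}$. Since $\{p_1,\dots,p_k\}\subseteq X_i\cup Z$, one gets
\[
\sconv{\{p_1,\dots,p_k\}}\subseteq\sconv{X_i\cup Z}=\bigcup_{q\in X_i,\,z\in Z}\spi{q,z},
\]
and each $\spi{q,z}\subseteq\sconv{\{q\}\cup Z}=\sconv{\{q,p_3,\dots,p_k\}}$, which is precisely the replacement you need. Invoking that sublemma, rather than reproving a Carath\'eodory-type statement from scratch on the sphere, closes the gap and turns your merge-and-pad scheme into a clean alternative proof.
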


\begin{proof}[Proof of Lemma~\ref{lem:spindlecvxhull-general}]

First, we prove the following special case.

\begin{sublemma}\label{sublem:spindlecvxhull-special} 
Let $Y$ be a spindle convex set in $\ED$ and $z\in\ED$. Then 
  \[\sconv{Y\cup\{z\}}=\bigcup_{y\in Y} \spi{y,z}.\]
\end{sublemma}

For the proof of Sublemma~\ref{sublem:spindlecvxhull-special}, we quote 
Lemma~5.6 of \cite{BLNP07}.

\begin{lemma}\label{lem:sconv}
Let $A\subset \ED$ be a set with $\crad (A)<1$, and let $\B[q]$ be a
closed unit ball containing $A$. Then

\begin{tabular}{cl}
(i)& 
$A\cap\S(q)$ is contained in an open hemisphere of $\S(q)$ and\\
(ii)&
$\sconv{A}\cap\S(q)=\mathrm{Sconv}(A\cap\S(q))$
\end{tabular}

where $\mathrm{Sconv}$ denotes the spherical convex hull within
$\S(q)$. 
\end{lemma}

\begin{proof}[Proof of Sublemma~\ref{sublem:spindlecvxhull-special}]
We need to show that the right hand side is spindle convex, for which it is
sufficient to show the claim in the special case when
$Y$ is the spindle of two points, say $Y=\spi{y_1,y_2}$.
\remark{
Indeed, if $u_1, u_2\in Y^\prime$ then
$u_1\in\spi{y_1, z}$ and $u_2\in\spi{y_2, z}$ for some $y_1,y_2\in Y$ and, in
order to prove that $\spi{u_1, u_2}\subseteq Y^\prime$, it is sufficient to show
that $\spi{u_1, u_2}\subseteq \bigcup\limits_{y\in \spi{y_1,y_2}} \spi{y,z}$.
}

  First, assume that $\crad (\{y_1, y_2, z\})<1$.
Denote by $Y^\prime=\bigcup_{y\in Y} \spi{y,z}$ and
by $\hat Y:=\sconv{\{y_1, y_2, z\}}$.
Clearly, $Y^\prime\subseteq\hat Y$, we prove the reverse containment, for
which it is sufficient to show that $\bd \hat Y\subseteq Y^\prime$.

Let $w$ be a point of $\bd \hat Y$ distinct from
$y_1, y_2$ and $z$. By Corollary~3.4. of \cite{BLNP07}, there is a supporting
unit
sphere $\S(q)$ of $\hat Y$ through $w$. By Lemma~\ref{lem:sconv}, $\S(q)\cap
\hat Y=\mathrm{Sconv}(\{y_1, y_2, z\}\cap\S(q))$. On the other hand, clearly,
$\mathrm{Sconv}(\{y_1, y_2, z\}\cap\S(q))\subseteq Y^\prime$. Hence,
$w\in Y^\prime$ finishing the proof in the case when a ball of radius less than
one contains $Y\cup\{z\}$.

Assume now that $\crad (\{y_1, y_2, z\})=1$. We may assume that $\B[o]$ is the
only unit ball that contains $\{y_1, y_2, z\}$. Clearly, $y_1, y_2$ and $z$ lie
on a great circle of $\S(o)$. If $y_1=-y_2$, we are done.
Otherwise, the shorter great circular arc on $\S(o)$ connecting $y_1$ and $y_2$
contains $-z$ (or else, $y_1,y_2$ and $z$ would be on an open great semi-circle
contradicting the assumption that $\crad (\{y_1, y_2, z\})=1$). On the other
hand, this great circular arc is contained in $\spi{y_1,y_2}\cap\S(o)$. Thus,
the right hand side contains $\spi{z,-z}=\B[o]$.

Finally, assume that $\crad (\{y_1, y_2, z\})=\lambda>1$. Then by the previous
paragraph,
\[\sconv[\lambda]{Y\cup\{z\}}=\bigcup_{y\in Y} \spi[\lambda]{y,z}\]
both being a ball of radius $\lambda$. Clearly, replacing $\lambda$ by 1 we
obtain a larger set on both sides, which may only be $\ED$.
\end{proof}

Before continuing with the proof of Lemma~\ref{lem:spindlecvxhull-general} we 
derive the following
spindle starshaped analogue of a theorem of Smith \cite{Sm68} on kernels of 
starshaped sets. In what follows, 
\emph{maximality} of a set is taken with respect to containment.

\begin{cor}\label{cor:kerintersection}
 Let $S$ be a spindle starshaped set in $\ED$. Then
\[
 \kers(S)=\cap\{ Y\subset S \st  Y \mbox{ is a maximal spindle convex subset of
} S\}.
\]
\end{cor}
\begin{proof}
 Since singletons are spindle convex sets and, by Zorn's lemma, every spindle
convex subset of $S$ is contained in a maximal spindle convex subset of $S$, the
left hand side contains the right one. The reverse containment follows from
Sublemma~\ref{sublem:spindlecvxhull-special}.
\end{proof}

Now, we prove Lemma~\ref{lem:spindlecvxhull-general} for $n=2$.

\begin{sublemma}\label{sublem:spindlecvxhull} 
Let $Y$, $Z$ be spindle convex sets in $\ED$. Then 
  \[\sconv{Y\cup Z}=\bigcup_{y\in Y , z\in Z} \spi{y,z}.\]
\end{sublemma}

\begin{proof}[Proof of Sublemma~\ref{sublem:spindlecvxhull}]
One can follow the setup of the proof of 
Sublemma~\ref{sublem:spindlecvxhull-special} and derive
the desired claim from the analogue spherical convexity claim in spherical 
$3$-space, which in fact, follows from the analogue
convexity claim in Euclidean $3$-space. The relevant somewhat laborious, but 
straightforward details we leave to the reader.
\end{proof}

Finally, we are ready to prove Lemma~\ref{lem:spindlecvxhull-general} by 
induction on $n$. The details are as follows. Recall that 
Sublemma~\ref{sublem:spindlecvxhull}
proves  Lemma~\ref{lem:spindlecvxhull-general} for $n=2$. Thus, we can assume 
that  Lemma~\ref{lem:spindlecvxhull-general} holds for all $n$ at most $k$ with 
$k\ge 2$, and then
we prove that it holds for $n=k+1$ as well. Indeed, if $m=d+1$,
then Lemma~\ref{lem:spindlecvxhull-general} simply follows from 
Lemma~\ref{caraforspindle} in a straightforward way. So, we may assume that 
$m=n=k+1<d+1$. Clearly, by induction 
$$
\sconv{X_1\cup\dots\cup X_k}=\bigcup_{x_{1}\in X_{1},\dots , x_{k}\in X_{k}} 
\sconv{ \{ x_{1},\dots , x_{k}  \}   }.
$$
Therefore it follows from Sublemma~\ref{sublem:spindlecvxhull} in a 
straightforward way that
$$
\sconv{X_1\cup\dots\cup X_{k}\cup 
X_{k+1}}=\sconv{(\left(\sconv{X_1\cup\dots\cup X_{k}}\right)  \cup X_{k+1}}
$$
$$
=\sconv{\left(   \bigcup_{x_{1}\in X_{1},\dots , x_{k}\in X_{k}} \sconv{ \{ 
x_{1},\dots , x_{k}  \}   }\right) \cup X_{k+1}}
$$
$$
=\bigcup_{x_{1}\in X_{1},\dots , x_{k}\in X_{k}, x_{k+1}\in X_{k+1}} \sconv{ \{ 
x_{1},\dots , x_{k}, x_{k+1}  \}   },
$$
finishing the proof of Lemma~\ref{lem:spindlecvxhull-general}.
\end{proof}

\section{Klee--type result: Proof of Theorem~\ref{thm:QBs}}

Some of the ideas of the proof come from \cite{Bo01}. The details are as 
follows.

We recall the notion of \emph{Minkowski difference} of two sets $A$ and $B$ in
$\ED$ (cf. p.133 of \cite{Sch93}):
\[
 A\sim B=\{x\in\ED\st x+ B\subseteq A\}.
\]

Let $S_\ast=\mathop{\cap}\limits_{i\in I} S_i$. By 
Corollary~\ref{thm:QKspindle}, it is
sufficient to show that for any $x_1,\ldots,x_{d+1}$ in $S_\ast$, we have a
translate of $K$ in $\cap_{t=1}^{d+1}\stsp{x_t, S_\ast}$.

For each $i\in I$, let 
\[T_i=\{x\in S_i \st \spi{x, x_t}\subseteq
S_i\;\;\forall 1\leq t\leq d+1\}.\]

Denote by $T_\ast=\mathop{\cap}\limits_{i\in I} T_i$. We need to prove that
$T_\ast\sim K\neq\emptyset$. Clearly,
\begin{equation}
 T_\ast\sim K=\mathop{\cap}\limits_{i\in I} (T_i\sim K).
\end{equation}
Hence, by the topological Helly theorem (see for example, \cite{Bo01} and 
\cite{DGK63}), it suffices to prove that for any
choice $i_1,\ldots,i_{d+1}$ of (not necessarily distinct)
indices from $I$,
\begin{equation}
 T^0:=(\mathop{\cap}\limits_{l=1}^{d+1} T_{i_l})\sim K   
=\mathop{\cap}\limits_{l=1}^{d+1} (T_{i_l}\sim K)
\end{equation}
is spindle starshaped.

Let $S^0=(\mathop{\cap}\limits_{l=1}^{d+1} S_{i_l})\sim K   
=\mathop{\cap}\limits_{l=1}^{d+1} \left(S_{i_l}\sim K\right)$. By
the hypothesis, $S^0$ is a non-empty, compact, spindle starshaped set.
To finish the proof, we will show that $\kers S^0\subseteq \kers T^0$.

Let $x$ be a point of $\kers S^0$ and $y\in T^0$ arbitrary. We need to show that
$\spi{x,y}\subseteq T^0$. We fix an index $i_l$, say $i_1$. We need 
$\spi{x,y}\subseteq T_{i_1}\sim K$ or, equivalently, that for any 
$u\in\spi{x,y}+K$ we
have $\spi{u,x_t}\subseteq S_{i_1}$ for any $t=1,\ldots,d+1$.

On the one hand, since $y\in T^0$, we have that
\begin{equation}
C_t:=\sconv{\big((y+K)\cup\{x_t\}\big)} \subseteq 
\mathop{\cap}\limits_{l=1}^{d+1}
S_{i_l}
\end{equation}
holds for all $1\le t\le d+1$.
On the other hand, $x+K\subseteq \kers \left(\cap_{l=1}^{d+1} S_{i_l}\right)$. 

It follows that
\[
 \cup\{\spi{a,b}\st a\in x+K, b\in C_t\}\subseteq 
\mathop{\cap}\limits_{l=1}^{d+1}
S_{i_l}
\]
holds for all $1\le t\le d+1$. We may assume that $K$ is spindle convex, since 
the spindle kernel of a spindle starshaped set is
spindle convex. Thus, Lemma~\ref{lem:spindlecvxhull-general} (more exactly, 
Sublemma~\ref{sublem:spindlecvxhull}) implies that
\begin{equation}\label{eq:convcup}
  \cup\{\spi{a,b}\st a\in x+K, b\in C_t\}=\sconv{((x+K)\cup C_t)},
\end{equation}
which clearly contains $\spi{u,x_t}$, finishing the proof of 
Theorem~\ref{thm:QBs}.

\section{Countable intersections: Proof of Theorem~\ref{thm:spindlebreengeneral}}

First, we prove the following: Let $\FF$ be a family of spindle starshaped sets 
in $\ED$ with the property
that  the intersection of any countable subfamily of $\FF$ is a spindle
starshaped (hence, non-empty) set. Then $\bigcap \FF$ is also starshaped. 
In fact, this is that special case of Theorem~\ref{thm:spindlebreengeneral} 
where $K$ is a singleton. The details are as follows.

 First, we define a new family, $\FF_1$ as follows. We enumerate members of
$\QD$ and carry out the following inductive algorithm. For each $q\in\QD$ if
there is an $F$ in our set family such that $q\notin F$ then we intersect each
member of the set family by $F$ to obtain the next set family. At the end of
this algorithm, we obtain the family $\FF_1$ whose countable subfamilies clearly
have spindle starshaped intersections.

  Now, $\QD\cap F$ is the same set for all $F\in\FF_1$. If this set is empty or
a singleton, then each set in $\FF$ is a singleton (otherwise they would have a
non-empty interior and hence, contain a rational point). Clearly, these
singletons must be identical, and the theorem follows.

  So we assume that $\QD\cap F$ contains more than one point (for each
$F\in\FF$). For a set $A\subset\ED$ we define its rational spindle kernel as
\[
 \kers^{\Qe}A=\left\{p\in A\st \spi{p,a}\cap\QD\subseteq A \mbox{ for each }
a\in A\cap \QD \right\}.
\]
Note that $\kers^{\Qe}A$ may contain non-rational points.

By definition and the fact that $\QD\cap F$ is the same set for all
$F\in\FF_1$, the following hold:
\begin{equation}\label{eq:kersqcontains}
\kers^{\Qe}F\supseteq \kers F\neq\emptyset \mbox{ for any } F\in\FF_1,
\end{equation}
and
\begin{equation}\label{eq:kersqcap}
 \kers^{\Qe} (F_1\cap F_2)=\kers^{\Qe} F_1 \cap \kers^{\Qe} F_2
 \mbox{ for any } F_1,F_2\in\FF_1.
\end{equation}

Assume that $\kers^{\Qe}F_1$ is a singleton, say $\{p\}$ for some $F_1\in\FF_1$.
Then by \eqref{eq:kersqcontains} and \eqref{eq:kersqcap}, $p$ is in $\kers
\bigcap\FF_1$. 

Thus, we may assume that for all $F\in\FF_1$, its rational spindle kernel
$\kers^{\Qe}F$ is not a singleton. Using 
Lemma~\ref{lem:spindlecvxhull-general}, we have
that 
\begin{equation}\label{eq:spindleinkers}
 \mbox{if } p,q\in \kers^{\Qe}F \mbox{ then } \inter \spi{p,q}\cap\QD \subseteq
\left(\kers^{\Qe}F\right)\cap \QD.
\end{equation}
Note that $\left(\kers^{\Qe}F\right)\cap \QD$ is the same set for all
$F\in\FF_1$.
It follows from \eqref{eq:spindleinkers} that the interior of
$\left(\kers^{\Qe}F\right)\cap \QD$ relative to
$\QD$ is not empty. Using the fact that $F$ is spindle starshaped, it is not
difficult to see that for any interior (relative to $\QD$) point $p$ of
$\left(\kers^{\Qe}F\right)\cap \QD$, we have that $p\in\kers F$. Thus for any
such $p$, we have $p\in\kers \left(\cap\FF_1\right)=\kers\left(\cap\FF\right)$
finishing the proof.

%In fact, the same proof with a slight modification yields the following more
%general result: Let $K$ be a set in $\ED$ and $\FF$ be a family of spindle 
%starshaped sets in
%$\ED$ with the property that  the intersection of any countable subfamily of
%$\FF$ is a spindle starshaped set whose spindle kernel contains a translate of
%$K$. Then $\kers \bigcap \FF$ also contains a translate of $K$.

To prove the general case (ie., when $K$ is not a singleton),  we may follow 
the above proof up to
\eqref{eq:kersqcontains}, from which 
\begin{equation*}
\left(\kers^{\Qe}F\right)\sim K\supseteq \left(\kers F\right) \sim
K\neq\emptyset \mbox{ for any } F\in\FF_1,
\end{equation*}
follows. Similarly, from \eqref{eq:kersqcap} we obtain
\begin{equation*}
 \big(\kers^{\Qe} (F_1\cap F_2)\big)\sim K=\bigg[\big(\kers^{\Qe} F_1\big)\sim K
\bigg]\cap \bigg[\big(\kers^{\Qe} F_2\big)\sim K\bigg]
\end{equation*}
holds for any $F_1,F_2\in\FF_1$.
Again, we may assume that for all $F\in\FF_1$, $\kers^{\Qe}(F)\sim K$ is not a
singleton, otherwise the theorem follows easily. From \eqref{eq:spindleinkers} 
we
obtain
\begin{equation*}
 \mbox{if } p,q\in (\kers^{\Qe}F)\sim K \mbox{ then } \inter \spi{p,q}\cap\QD
\subseteq
\left((\kers^{\Qe}F)\sim K\right)\cap \QD.
\end{equation*}
Now, by the same inductive procedure that we used at the beginning of the proof
above, we may assume that $\left((\kers^{\Qe}F)\sim
K\right)\cap \QD$ is the same set for all $F\in\FF_1$. Finally, for any interior
(relative to $\QD$) point $p$ of $\left((\kers^{\Qe}F)\sim
K\right)\cap \QD$, we have that $p\in(\kers \cap \FF )\sim K$.

\section{Art gallery: Proofs of Theorems~\ref{thm:artgal} and 
\ref{thm:artgalspin}}\label{sec:artgal}

\begin{rem}\label{rem:artgal}
 The assumption of simple connectedness cannot be dropped. This is shown by the 
example of an annulus. For any $N\in\Ze^+$, if the inner circle is small enough 
then any $N$ points of the outer circle can be seen from some point of the 
annulus, but no point sees all the points (or a sufficiently large finite 
subset) of the outer circle.

This example can be turned into one in three--space: for any $N\in\Ze^+$ 
there is a homology cell $S$ in $\Re^3$ such that any $N$ points of a certain 
subset 
of $S$ can be seen from some point of $S$ but no point sees them all. We leave 
it as an exercise to the reader.
\end{rem}

We note that Remark~\ref{rem:artgal} applies to this spindle version of the 
theorem as well.

\begin{proof}[Proof of Theorem~\ref{thm:artgal}]
We call a set $F$ in $S$ \emph{geodesically convex} with respect to $S$ if for 
any $p,q\in F$ the shortest path in $S$ connecting $p$ and $q$ (which is unique 
by the simple connectedness of $S$) is contained in $F$. We claim the following:

\begin{enumerate}
 \item The intersection of geodesically convex sets is again geodesically
convex (w.r.t. $S$).
 \item A geodesically convex set is simply connected.
 \item $\star{x,S}$ is compact and geodesically convex for any $x\in S$.
\end{enumerate}

1. is obvious. 2. is easy to prove. Indeed, consider a subset $F$ of $S$ that 
is not simply connected. Then there is a line $\ell$ through some point of 
$F\setminus S$ whose intersection with $F$ is not connected. It clearly shows 
that $F$ is not geodesically convex.

To prove 3., let $p$ and $q$ be points of $\star{x,S}$, and consider the 
shortest path $\gamma$ connecting them within $S$. If $x,p$ and $q$ are 
collinear, then $\gamma$ is simply the line segment $[p,q]$, which is in 
$\star{x,S}$. If they are not collinear then the rays 
$\overrightarrow{xp},\overrightarrow{xq}$ bound two angular regions on the 
plane, one of which is convex, call it $T$. 
We may assume that $\gamma$ is not $[p,x]\cup[x,q]$, as otherwise we are done. 
Let $p^\prime$ (resp. $q^\prime$) be the point of 
$\gamma\cap\overrightarrow{xp}$ (resp. $\gamma\cap\overrightarrow{xq}$) closest 
to $x$. Consider the part $\gamma^\prime$ of $\gamma$ from $p^\prime$ to 
$q^\prime$. Clearly, $\gamma=[p,p^\prime]\cup\gamma^\prime\cup[q^\prime,q]$. 
Now, neither $p^\prime$ nor $q^\prime$ is $x$. Moreover, clearly, 
$\gamma^\prime\subset T$. By the simple connectedness of $S$, we have that 
$\gamma^\prime\subset \star{x,S}$ finishing the proof of the geodesic convexity 
of $\star{x,S}$. Its compactness follows from the compactness of $S$.

Finally, Theorem~\ref{thm:artgal} follows from these claims and the topological 
version of Helly's theorem.
\end{proof}

\begin{proof}[Proof of Theorem~\ref{thm:artgalspin}]
The only point where the proof of Theorem~\ref{thm:artgal} needs to be changed 
a bit is the proof of 3. First, we notice that by the simple connectedness of 
$S$, $\gamma^\prime$ lies in the triangle $\Delta=\Delta_{{p^\prime} x 
{q^\prime}}$. It is not difficult to see that for any point $y$ of this 
triangle, $\spi{x,y}\subseteq \Delta\cup \spi{x,{p^\prime}}\cup 
\spi{x,{q^\prime}}$. Now, again, the simple connectedness of $S$ yields that 
the each point of $\gamma^\prime$ is spindle visible from $x$.
\end{proof}

%\vspace{1cm}
\medskip

\noindent
K\'aroly Bezdek
\newline
Department of Mathematics and Statistics, University of Calgary, Canada,
\newline
Department of Mathematics, University of Pannonia, Veszpr\'em, Hungary,
\newline
{\sf E-mail: bezdek@math.ucalgary.ca}
\newline
and
\newline
M\'arton Nasz\'odi
\newline
Institute of Mathematics, E\"otv\"os Lor\'and University, Budapest, Hungary,
\newline
{\sf E-mail: marton.naszodi@math.elte.hu}

%\nocite* 
%\bibliographystyle{amsplain}
%\bibliography{biblio}

\begin{thebibliography}{GGM}
\bibitem{BLNP07}
K. Bezdek, Zs. L\'angi, M. Nasz\'odi, and P. Papez, {\em Ball-Polyhedra}, 
Discrete Comput. Geom. 38/2 (2007), 201--230.

\bibitem{Bo99}
N. A. Bobylev, {\em Some remarks on star-shaped sets}, Mat. Zametki 65/4 
(1999), 511--519.

\bibitem{Bo01}
N. A. Bobylev, {\em The Helly theorem for starshaped sets}, J. Math. Sci. (New 
York) 105/2 (2001), 1819--1825.

\bibitem{Br03}
M. Breen, {\em The dimension of the kernel in an intersection of starshaped 
sets}, Arch. Math. 81/4 (2003), 485--490.

\bibitem{Br05}
M. Breen, {\em A Helly-type theorem for countable intersections of starshaped 
sets}, Arch. Math. 84/3 (2005), 282--288.

\bibitem{Cs01}
B. Csik\'os, {\em On the volume of flowers in space forms}, Geom. Dedicata 
86/1-3 (2001), 59–-79.

\bibitem{DGK63}
L. Danzer, B. Gr\"unbaum, and V. Klee, {\em Helly's theorem and its relatives}, 
Proc. Sympos. Pure Math., Vol. VII  (1963), 101--180.

\bibitem{DiBe02}
E. DiBenedetto, {\em Real analysis}, Birkh\"auser Boston, Inc., Boston, MA, 
2002.

\bibitem{FoTo97}
A. Forte Cunto and F. A. Toranzos, {\em Local characterization of 
starshapedness}, Geom. Dedicata 66/1 (1997), 65–-67.

\bibitem{GoMe94}
Y. Gordon and M. Meyer, {\em On the volume of unions and intersections of balls 
in Euclidean space}, Geometric aspects of functional analysis (Israel, 
1992–1994), 91–-101, Oper. Theory Adv. Appl., 77, Birkh\"auser, Basel, 1995.

\bibitem{He23}
E. Helly, \emph{\"Uber mengen konvexer k\"orper mit gemeinschaftlichen punkte.},
  Jahresbericht der Deutschen Mathematiker-Vereinigung \textbf{32} (1923),
  175--176 (ger).

\bibitem{Kl53}
V. L. Klee, {\em The critical set of a convex body}, Amer. J. Math. 75 (1953), 
178--188. 

\bibitem{Kr46}
M. Krasnosselsky, {\em Sur un critere pour qu'un domaine soit etolie}, Mat. 
Sbornik 61(1946), 309--310.

\bibitem{Sch93}
R. Schneider, {\em Convex bodies: the Brunn-Minkowski theory}, Cambridge 
University Press, Cambridge, 1993. 

\bibitem{Sm68}
C. R. Smith, {\em A characterization of star-shaped sets}, Amer. Math. Monthly 
75 (1968), 386.

\bibitem{To67}
F. A. Toranzos, {\em Radial functions of convex and star-shaped sets}, Amer. 
Math. Monthly 74 (1967), 278--280.

\bibitem{urrutia_art_2000}
Jorge Urrutia, Univresidad Nacional, and Autónoma México, \emph{Art gallery
  and illumination problems}, In Handbook on Computational Geometry, Elsevier
  Science Publishers, J.R. Sack and, 2000, p.~1026.

\bibitem{Za88}
T. Zamfirescu, {\em Typical starshaped sets}, Aequationes Math. 36/2-3 (1988), 
188–-200.
\end{thebibliography}

\end{document}